\newtheorem{theorem}{Theorem}
\newtheorem{lemma}[theorem]{Lemma}
\newtheorem*{claim*}{Claim}
\newtheorem{corollary}[theorem]{Corollary}
\newtheorem{Main Conjecture}[theorem]{Main Conjecture}
\theoremstyle{remark}
\theoremstyle{plain}
\newcommand{\cellsize}{14}
\newlength{\cellsz} \setlength{\cellsz}{\cellsize\unitlength}
\newsavebox{\cell}
\sbox{\cell}{\begin{picture}(\cellsize,\cellsize)
\put(0,0){\line(1,0){\cellsize}}
\put(0,0){\line(0,1){\cellsize}}
\put(\cellsize,0){\line(0,1){\cellsize}}
\put(0,\cellsize){\line(1,0){\cellsize}}
\end{picture}}
\newcommand\cellify[1]{\def\thearg{#1}\def\nothing{}%
\ifx\thearg\nothing
\vrule width0pt height\cellsz depth0pt\else
\hbox to 0pt{\usebox{\cell} \hss}\fi%
\vbox to \cellsz{
\vss
\hbox to \cellsz{\hss$#1$\hss}
\vss}}
\newcommand\tableau[1]{\vtop{\let\\\cr
\baselineskip -16000pt \lineskiplimit 16000pt \lineskip 0pt
\ialign{&\cellify{##}\cr#1\crcr}}}
\newcommand{\excise}[1]{}
\begin{document}
\pagestyle{plain}
\title{Super-Catalan numbers of the Third and Fourth Kind}
\author{Irina Gheorghiciuc and Gidon Orelowitz}
\address{Dept.~of Mathematics, Carnegie Mellon  University, Pittsburgh, PA 15213, USA} 
\email{gheorghi@andrew.cmu.edu}
\address{Dept.~of Mathematics, U.~Illinois at Urbana-Champaign, Urbana, IL 61801, USA} 
\email{gidono2@illinois.edu}
\date{\today}
\begin{abstract}
The \emph{Super-Catalan numbers} are a generalization of the Catalan numbers defined as $T(m,n) =  \frac{(2m)!(2n)!}{2m!n!(m+n)!}$.  It is an open problem to find a combinatorial interpretation for $T(m,n)$.  We resolve this for $m=3,4$ using a common form; no such solution exists for $m=5$.
\end{abstract}
\maketitle
\section{Introduction}
\subsection{Main Results}

The \emph{Super-Catalan numbers}, first described by Catalan in 1874, are
\begin{equation}
T(m,n) := \frac{\binom{2m}{m}\binom{2n}{n}}{2\binom{m+n}{m}} = \frac{1}{2}\cdot\frac{(2m)!(2n)!}{m!n!(m+n)!}.
\end{equation}

In \cite{SuperBallot}, Gessel shows that $T(m,n)$ is a positive integer for all $(m,n)\in \mathbb{Z}_{\geq 0} \times \mathbb{Z}_{>0}$.  It is natural to ask if a combinatorial interpretation of $T(m,n)$ exists.  $T(0,n)=\binom{2n-1}{n}$ by observation, and $T(1,n)=C_n$, the Catalan numbers.  This sequence has numerous combinatorial interpretations;  see, e.g., Stanley's \cite{Stanley}.  The one that this paper will use is that $C_n$ is the number of Dyck paths of length $2n$.

A \emph{Dyck path} is a sequence $\pi = (\pi(1),\pi(2),\dots, \pi(2n))$ such that \[\pi(k)=\pm 1,\  \sum_{i=1}^{2n} \pi(i) = 0, \text{ and } \sum_{i=1}^k \pi(i) \geq 0 \text{ for all } 1\leq k \leq 2n.\]  The \emph{length} of $\pi$, denoted $|\pi|$, is the number of elements in the sequence.  The \emph{total length} of a tuple of Dyck paths $(\pi^{(1)},\dots, \pi^{(n)})$ is $\sum_{i=1}^n |\pi^{(i)}|$.  The  \emph{height} of a Dyck path $\pi$ is \[h(\pi): = \max_k\sum_{i=1}^k \pi(i),\] and say $h(\epsilon) = 0$, where $\epsilon$ is the unique Dyck path of length 0.

Let 
\begin{equation}
\label{def:G}
G_n(m,k):= |\{(\pi^{(1)},\dots, \pi^{(m)}): \sum_{i=1}^m |\pi^{(i)}| = 2n,|h(\pi^{(i)})-h(\pi^{(j)})|\leq k\}|.
\end{equation}

\begin{theorem}
\label{m3Theorem}
\begin{equation}
\label{m=3}
T(3,n) = G_n(3,1) + 2G_{n-1}(3,0)
\end{equation}
\end{theorem}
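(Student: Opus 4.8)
\textit{Proof proposal.} The plan is to compute both sides as power series in $x$, where a Dyck path of length $2k$ is weighted by $x^k$ (so total length $2n$ corresponds to $x^n$). First I would rewrite the left-hand side in terms of Catalan numbers: a routine manipulation of factorials gives the recursion $T(m,n+1)+T(m+1,n)=4T(m,n)$, and since $T(1,n)=C_n$, applying it twice yields $T(3,n)=C_{n+2}-8C_{n+1}+16C_n$. Equivalently, with $C(x)=\sum_{n\ge0}C_nx^n=\tfrac{1-\sqrt{1-4x}}{2x}$,
\[
\sum_{n\ge1}T(3,n)\,x^n=\frac{(1-4x)^2\,C(x)-(1-2x)(1-5x)}{x^2}.
\]

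Next I would compute the generating functions of $G_n(3,0)$ and $G_n(3,1)$. Let $B_h(x)$ be the generating function, by semilength, for Dyck paths of height exactly $h$. The standard continued-fraction computation gives $B_h(x)=x^h/(q_h(x)\,q_{h+1}(x))$, where $q_0=q_1=1$ and $q_{h+1}=q_h-xq_{h-1}$; these polynomials satisfy $q_h^2-q_{h-1}q_{h+1}=x^h$, and from $q_{h+2}+xq_h=q_{h+1}$ one also gets the handy identity $B_h+B_{h+1}=x^h/(q_h(x)\,q_{h+2}(x))$. Grouping triples by their common height gives $\sum_n G_n(3,0)x^n=\sum_{h\ge0}B_h^3=:S$. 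For $G_n(3,1)$, a triple has all pairwise height differences $\le1$ exactly when its three heights lie in some window $\{h,h+1\}$; summing $(B_h+B_{h+1})^3$ over $h\ge0$ overcounts such a triple of constant height $\ge1$ (it appears for two values of $h$) and counts every other one once, whence $\sum_n G_n(3,1)x^n=\sum_{h\ge0}(B_h+B_{h+1})^3-\sum_{h\ge1}B_h^3=:S'-S+1$. Hence
\[
\sum_{n\ge1}\bigl(G_n(3,1)+2G_{n-1}(3,0)\bigr)x^n=(S'-S+1)-1+2xS=S'+(2x-1)S,
\]
and by the first step the theorem reduces to the single identity $S'+(2x-1)S=\dfrac{(1-4x)^2C(x)-(1-2x)(1-5x)}{x^2}$.

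To prove this identity I would evaluate $S$ and $S'$ in closed form. Writing $q_h=(\alpha^{h+1}-\beta^{h+1})/(\alpha-\beta)$ with $\alpha,\beta=\tfrac{1\pm\sqrt{1-4x}}{2}$ (so $\alpha\beta=x$, $\alpha+\beta=1$, $1-4x=(\alpha-\beta)^2$, $C(x)=1/\alpha$) and substituting $u=\beta/\alpha$, both sums become, after clearing powers of $\alpha$, $q$-series of the shape $\sum_{h\ge0}\tfrac{u^{3h}}{(1-u^{h+1})^3(1-u^{h+2})^3}$ (and the same with $u^{h+3}$ in place of $u^{h+2}$). These I would sum by expanding each $(1-u^k)^{-3}$ into partial-fraction-type pieces and telescoping in $h$; each of $S$ and $S'$ on its own leaves a non-telescoping remainder, but the coefficient $2x-1$ in $S'+(2x-1)S$ is precisely the one that makes those remainders cancel, collapsing the combination to the rational-in-$C(x)$ expression above. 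Translating back through $\alpha,u\leftrightarrow x,C(x)$ then closes the argument; the same template, with the linear combination of $G$'s dictated by the analogous Catalan expansion of $T(4,n)$, is what yields the uniform statement promised in the abstract for $m=4$.

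The main obstacle is this last step. The reductions in the first two paragraphs are essentially bookkeeping, whereas finding the correct telescoping decomposition of $\sum_h B_h(x)^3$ and $\sum_h(B_h(x)+B_{h+1}(x))^3$, and verifying that the engineered combination $S'+(2x-1)S$ degenerates to the Catalan expression, is where all the content sits — the cancellation of the non-telescoping parts is the heart of the proof.
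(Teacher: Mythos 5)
Your reductions in the first two paragraphs are correct: Rubenstein's recursion gives $T(3,n)=C_{n+2}-8C_{n+1}+16C_n$, the stated generating function for $\sum_{n\ge1}T(3,n)x^n$ checks out, the height decomposition $B_h=x^h/(q_hq_{h+1})$ and the identity $B_h+B_{h+1}=x^h/(q_hq_{h+2})$ are standard, and the inclusion--exclusion giving $\sum_nG_n(3,1)x^n=S'-S+1$ is right. Note that this is a genuinely different route from the paper's, which never forms a generating function: there, $4G_n(2,1)-G_{n+1}(2,1)$ is rewritten directly as $G_n(3,1)+2Q_n(1,1)-2Q_n(2)$ via a bijective length-reduction recurrence for the path-height function (Lemma~\ref{shorten}, packaged as Corollary~\ref{GShorten}), and $Q_n(1,1)-Q_n(2)=G_{n-1}(3,0)$ is a one-line telescope (Corollary~\ref{QAlternator}). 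Your approach is closer in spirit to Gessel--Xin's treatment of $T(2,n)$.

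The proof nevertheless has a genuine gap, precisely where you place it. The identity $S'+(2x-1)S=\bigl((1-4x)^2C(x)-(1-2x)(1-5x)\bigr)/x^2$ is never established; you assert that $2x-1$ ``is precisely the coefficient that makes the remainders cancel,'' but that assertion presupposes the theorem rather than proving it. To see that the remaining work is substantial: with $a_h=u^h/(1-u^{h+1})$ one finds $x^{3h}/(q_h^3q_{h+1}^3)$ is proportional to $(a_h-a_{h+1})^3/(1-u)^3$, and cubing out leaves, besides the telescoping part $a_h^3-a_{h+1}^3$, the cross terms $-3a_ha_{h+1}(a_h-a_{h+1})=-3(1-u)\,u^{3h+1}/\bigl((1-u^{h+1})^2(1-u^{h+2})^2\bigr)$; the analogous remainder for $S'$ involves $a_h$ and $a_{h+2}$ and has a different shape. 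These are non-rational Lambert-type series, and proving that the particular combination $S'+(2x-1)S$ annihilates all of them requires a further layer of identities among such sums that you have not supplied. Until that computation is carried out (or the target identity is verified by some independent means), what you have is a correct reduction plus a plausible plan, not a proof.
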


Theorem \ref{m3Theorem} is used to prove:

\begin{theorem}
\label{m4Theorem}
\begin{equation}
\label{m=4}
T(4,n) = G_n(4,1) + 10G_{n-1}(4,0)+ 4G_{n-2}(5,0)
\end{equation}
\end{theorem}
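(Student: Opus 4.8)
The plan is to deduce Theorem~\ref{m4Theorem} from Theorem~\ref{m3Theorem} via the elementary three-term recurrence for the Super-Catalan numbers. A one-line factorial computation gives
$$T(m,n+1) + T(m+1,n) = 4\,T(m,n),$$
since $T(m,n+1)/T(m,n) = \tfrac{2(2n+1)}{m+n+1}$ and $T(m+1,n)/T(m,n) = \tfrac{2(2m+1)}{m+n+1}$ sum to $4$. With $m=3$ this reads $T(4,n) = 4T(3,n) - T(3,n+1)$, and substituting $T(3,n) = G_n(3,1) + 2G_{n-1}(3,0)$ and $T(3,n+1) = G_{n+1}(3,1) + 2G_n(3,0)$ from Theorem~\ref{m3Theorem} turns Theorem~\ref{m4Theorem} into the equivalent identity
\begin{equation}
\label{eq:keyG}
G_n(4,1) + 10G_{n-1}(4,0) + 4G_{n-2}(5,0) + G_{n+1}(3,1) + 2G_n(3,0) = 4G_n(3,1) + 8G_{n-1}(3,0),
\end{equation}
an equality between sums of nonnegative integers (both sides equal $20$ when $n=1$, matching $T(4,1)=14$).

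I would establish \eqref{eq:keyG} bijectively, reading each side as a disjoint union of sets of tuples of Dyck paths decorated by labels from $\{1,\dots,c\}$ for the indicated constants $c$. The operative moves are the ways to cut one Dyck path into two---either at a return to height $0$, which preserves total length, or near a highest vertex, which lowers it---thereby converting an $m$-tuple into an $(m+1)$- or $(m+2)$-tuple while adjusting heights in a controlled way. The constants $4$, $8$, $10$, $4$ should emerge from casework on how many paths of a width-$1$ tuple sit at the lower height $h$ versus the upper height $h+1$ (the monomials in $(E_h+E_{h+1})^m - E_{h+1}^m$, where $E_h$ counts Dyck paths of height exactly $h$), combined with the telescoping relations among consecutive $E_h$. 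One would arrange the correspondence so that the four copies of $G_n(3,1)$ distribute among the genuine $4$-tuples of $G_n(4,1)$, the $3$-tuples of $G_{n+1}(3,1)$, and the equal-height families, and similarly that the eight copies of $G_{n-1}(3,0)$ feed $G_{n-1}(4,0)$, $G_{n-2}(5,0)$, $G_n(3,0)$, and part of $G_n(4,1)$.

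The crux, and the main obstacle, is that the ``subtracted'' families $G_{n+1}(3,1)$ and $2G_n(3,0)$ appear on the left of \eqref{eq:keyG}, so the cutting maps are not bijections as stated: one must either build an explicit sign-reversing involution that cancels the over-count, or run an inclusion--exclusion that carefully tracks the minimal height of a tuple and the number of paths attaining it. Pinning the multiplicities $1$, $10$, $4$ on the right down exactly, rather than up to an unwanted error term, is where essentially all the work lies. If a transparent bijection proves elusive, \eqref{eq:keyG} can instead be checked analytically: with $G(m,0) = \sum_{h\ge0}E_h^m$ and $G(m,1) = \sum_{h\ge0}\bigl[(E_h+E_{h+1})^m - E_{h+1}^m\bigr]$, where $E_h = D_h - D_{h-1}$ and $D_h = 1/(1-qD_{h-1})$ are the convergents of the Catalan continued fraction, the identity reduces to a telescoping sum in the $D_h$.
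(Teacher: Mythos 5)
Your opening reduction is correct and is the same first step the paper takes: the recurrence $T(m+1,n)=4T(m,n)-T(m,n+1)$ (your one-line verification is fine; the paper cites it to Rubenstein) combined with Theorem~\ref{m3Theorem} turns the claim into your key identity among the $G$'s, and your numerical check at $n=1$ is right. But everything after that is a plan rather than a proof, and you say so yourself: the cutting maps ``are not bijections as stated,'' the constants $4$, $8$, $10$, $4$ ``should emerge'' from casework you do not perform, and ``essentially all the work lies'' in pinning down the multiplicities exactly --- which you never do. Neither the sign-reversing involution nor the inclusion--exclusion is constructed. The closing generating-function remark only sets up notation ($E_h=D_h-D_{h-1}$ with $D_h=1/(1-qD_{h-1})$, which does correctly encode $G(m,0)$ and $G(m,1)$) and then asserts without argument that the identity ``reduces to a telescoping sum in the $D_h$.'' That assertion is precisely the content that needs proving; it is not evident that the relevant combination of fourth powers of the $E_h$ telescopes, and arranging for it to do so is where the difficulty sits.

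For comparison, the paper closes exactly this gap with a small calculus of height-refined counts: the path-height function $P_n(a_1,\dots,a_m)$ and its grouped version $Q_n$, a ``splitter'' identity $P_n(\dots,a)=\sum_{y=1}^{a}P_n(\dots,a-1,y)$ coming from a bijection of Allen--Gheorghiciuc, and a ``shorten'' identity expressing $P_n(\dots,a)$ in terms of $P_{n-1}$ by deleting two steps of a path, with correction terms indexed by $h_-$. Corollary~\ref{GShorten} then rewrites $4G_n(3,1)-G_{n+1}(3,1)$ as $G_n(4,1)$ plus explicit $Q_n$ terms, and Corollary~\ref{QAlternator} identifies the resulting alternating sum $\sum_{k}(-1)^{k+1}Q_{n-1}(k,4-k)$ as $G_{n-2}(5,0)$, which is how the coefficients $10$ and $4$ are pinned down exactly. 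Some such machinery (or a genuinely carried-out generating-function computation) is required; as written, your proposal establishes only the reduction to the $G$-identity, not the identity itself.
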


Since $G_n(m,k)$ is defined combinatorially in (\ref{def:G}), (\ref{m=3}) and (\ref{m=4}) immediately imply a combinatorial interpretation for $T(3,n)$ and $T(4,n)$.

Vacuously, \[G_n(1,k) = C_n \text{ for all } k,n\in \mathbb{Z}_{\geq 0}, \] so in particular
\begin{equation}
\label{m=1}
T(1,n) = G_n(1,1).
\end{equation}
In \cite{GesselT2n}, Gessel and Xin show that 
\begin{equation}
\label{m=2}
T(2,n)= G_n(2,1).
\end{equation}
Each of (\ref{m=1}), (\ref{m=2}), (\ref{m=3}), and (\ref{m=4}) are of the form
\begin{equation}
\label{conjecture}
T(m,n) = G_n(m,1) + \sum_{k=1}^{m-2} a_{k,m}G_{n-k}(m+k-1,0)
\end{equation}
for some $a_{k,m}\in \mathbb{Z}_{> 0}$.  However, there are no choices for $a_{1,5}$, $a_{2,5}$, and $a_{3,5}$ in $\mathbb{R}$ that simultaneously satisfy (\ref{conjecture}) when $m=5$ and $n=1,2,3$.  Therefore, (\ref{conjecture}) is not true for $m=5$. 

\subsection{Comparison to Literature}

Theorem 1 in \cite{T2n} shows that \[T(m,n)=P(m,n)-N(m,n),\] where $P(m,n)$ and $N(m,n)$ are the number of $m$-positive and $m$-negative Dyck paths of length $2m+2n-2$ respectively.  A Dyck path $\pi$ is \emph{$m$-positive} (respectively \emph{$m$-negative}) if
\begin{equation}
\sum_{i=1}^{2m-1} \pi(i)\equiv 1 \text{ mod 4 (3 mod 4 respectively).}
\end{equation}

A second identity, proved in \cite{Georgiadis}, says that \[2T(m,n)= (-1)^m\sum_{\pi\in \mathcal{P}_{m+n}} (-1)^{h_{2n}(\pi)}.\]
Here, $\mathcal{P}_{m+n}$ is the set of Dyck paths of length $2m+2n$ and 
\begin{equation}
h_{n}(\pi) = |\{i : \pi(i)=1,i>n\}|.
\end{equation}

The most relevant identity of the Super-Catalan numbers for this paper is that
\begin{equation}
\label{baserecursion}
T(m+1,n)=4T(m,n)-T(m,n+1),
\end{equation}
which is attributed to Rubenstein in \cite{SuperBallot}.

\section{The Path-Height Function}
In \cite{T2n}, Allen and Gheorghiciuc give a number of useful definitions for a non-empty Dyck path $\pi$.  In their paper, the \emph{$R$-point} is defined as 
\begin{equation}
R(\pi) := \max\{k:h(\pi) = \sum_{i=1}^k \pi(i)\}
\end{equation}
and the \emph{$X$-point} is 
\begin{equation}
X(\pi) := \max\{k\leq R(\pi):\sum_{i=1}^k \pi(i)=1\}.
\end{equation} 
Additionally, they define 
\begin{equation}
h_-(\pi):= \max_{1\leq k \leq X(\pi)}(\sum_{i=1}^{k}\pi(i)).
\end{equation}

\begin{lemma}
\label{trivial}
Let $\pi$ be a non-empty Dyck path.
\begin{enumerate}
\item $\pi(1) = 1$ and $\pi(|\pi|) = -1$.
\item $\pi(R(\pi)) = 1$ and $\pi(R(\pi)+1) = -1$.
\item $1\leq h_-(\pi)\leq h(\pi)$.
\item If $h(\pi) > 1$, then $\pi(X(\pi)+1) = 1$, and if $h(\pi) > 2$, then $\pi(X(\pi)+2) = 1$.
\end{enumerate}
\end{lemma}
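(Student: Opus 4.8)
The statement to prove is Lemma~\ref{trivial}, a collection of four elementary facts about a non-empty Dyck path $\pi$. The plan is to verify each item directly from the definitions of Dyck path, $h(\pi)$, $R(\pi)$, $X(\pi)$, and $h_-(\pi)$, using only the defining constraints $\pi(k)=\pm 1$, $\sum_{i=1}^{2n}\pi(i)=0$, and $\sum_{i=1}^k\pi(i)\geq 0$ for all $k$. Throughout I will write $s_k := \sum_{i=1}^k \pi(i)$ for the height of the partial sum, so $s_0 = 0$, $s_{|\pi|} = 0$, each step changes $s$ by $\pm 1$, and $s_k \geq 0$ always.

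For item (1): since $s_1 = \pi(1) = \pm 1$ and $s_1 \geq 0$, we must have $\pi(1) = 1$. For the last step, $s_{|\pi|-1} \geq 0$ and $s_{|\pi|} = s_{|\pi|-1} + \pi(|\pi|) = 0$, so $\pi(|\pi|) = -s_{|\pi|-1} \leq 0$, forcing $\pi(|\pi|) = -1$ (and incidentally $s_{|\pi|-1}=1$). For item (2): by definition $R(\pi)$ is the \emph{last} index at which the maximum height $h(\pi)$ is attained. Since $h(\pi) \geq s_1 = 1 > 0 = s_0$, we have $R(\pi) \geq 1$, and $s_{R(\pi)} = h(\pi)$ while $s_{R(\pi)-1} < h(\pi)$ would be immediate except that we only know $s_{R(\pi)-1}\le h(\pi)$; but $s_{R(\pi)-1} = h(\pi)$ is impossible since then $R(\pi)-1$ also attains the max, contradicting maximality only if... actually $s_{R(\pi)-1}=h(\pi)$ is allowed a priori, so instead argue: $s_{R(\pi)-1} = s_{R(\pi)} - \pi(R(\pi)) = h(\pi) - \pi(R(\pi)) \le h(\pi)$ gives $\pi(R(\pi))\ge 0$, hence $\pi(R(\pi)) = 1$. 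For $\pi(R(\pi)+1)$: if $R(\pi) = |\pi|$ then $s_{|\pi|}=0$, but $h(\pi)\ge 1>0$, contradiction, so $R(\pi) < |\pi|$ and $R(\pi)+1$ is a valid index; then $s_{R(\pi)+1} \leq h(\pi) = s_{R(\pi)}$ (by maximality of $h$) and also $s_{R(\pi)+1} \neq h(\pi)$ (by maximality of $R(\pi)$ as the last such index), so $s_{R(\pi)+1} < s_{R(\pi)}$, forcing $\pi(R(\pi)+1) = -1$.

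For item (3): First $X(\pi)$ is well-defined and $\geq 1$ because $s_1 = 1$ and $1 \leq R(\pi)$, so the set $\{k \le R(\pi): s_k = 1\}$ is non-empty (it contains $1$). Then $h_-(\pi) = \max_{1\le k\le X(\pi)} s_k \geq s_1 = 1$ gives the lower bound, and $h_-(\pi) = \max_{1\le k\le X(\pi)} s_k \leq \max_{k} s_k = h(\pi)$ gives the upper bound. For item (4): Suppose $h(\pi) > 1$. Note $s_{X(\pi)} = 1$ by definition of $X(\pi)$, and $X(\pi) \leq R(\pi)$. If $X(\pi) = R(\pi)$, then $1 = s_{X(\pi)} = s_{R(\pi)} = h(\pi)$, contradicting $h(\pi) > 1$; hence $X(\pi) < R(\pi)$, so $X(\pi)+1$ is a valid index with $X(\pi)+1 \leq R(\pi)$. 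Now $s_{X(\pi)+1} \in \{0, 2\}$. If $s_{X(\pi)+1} = 0$, then $\pi(X(\pi)+1) = -1$; I claim this is impossible: since $X(\pi)+1\le R(\pi)$ and $h(\pi)\ge 2$, the height must climb back from $0$ to $h(\pi)\ge 2$ strictly between index $X(\pi)+1$ and $R(\pi)$, so there is some index $j$ with $X(\pi)+1 < j \le R(\pi)$ and $s_j = 1$, contradicting the maximality of $X(\pi)$ among indices $\le R(\pi)$ with partial sum $1$. Hence $s_{X(\pi)+1} = 2$ and $\pi(X(\pi)+1) = 1$. For the second half, suppose $h(\pi) > 2$; then again there must be an index $\le R(\pi)$ beyond $X(\pi)+1$ reaching height $h(\pi) > 2$, so $s_{X(\pi)+2} = 3$ (the value $1$ is excluded by maximality of $X(\pi)$, and the value $1$ being the only alternative to $3$ when starting from $s_{X(\pi)+1}=2$), giving $\pi(X(\pi)+2) = 1$. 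The one subtlety to handle carefully is confirming the relevant indices $X(\pi)+1$, $X(\pi)+2$ do not exceed $R(\pi)$; this follows because reaching height $\geq 2$ (resp.\ $\geq 3$) requires at least one (resp.\ two) more up-steps after $X(\pi)$ before $R(\pi)$.

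The main obstacle, such as it is, lies entirely in item (4): one must argue that the partial sums cannot dip back down to $1$ after index $X(\pi)$ but before $R(\pi)$, which is exactly the maximality clause in the definition of $X(\pi)$, and then use the fact that height $h(\pi) > 1$ (resp.\ $> 2$) forces enough remaining up-steps before $R(\pi)$ to pin down $s_{X(\pi)+1}$ (resp.\ $s_{X(\pi)+2}$) uniquely. Everything else is a one-line consequence of $\pi(k)=\pm 1$ together with the non-negativity and boundary constraints.
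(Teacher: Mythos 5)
Your proof is correct and takes the same route as the paper, which simply asserts that all four items ``follow from the definitions'' and leaves the verification to the reader; you have filled in those details accurately, including the only genuinely non-trivial points (that $s_{R(\pi)+1}<h(\pi)$ by maximality of $R(\pi)$, and that in item (4) the partial sum cannot return to $1$ between $X(\pi)$ and $R(\pi)$ by maximality of $X(\pi)$, with the index-validity checks $X(\pi)+1,X(\pi)+2\leq R(\pi)$ forced by $h(\pi)>1$ and $h(\pi)>2$ respectively). No gaps.
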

\begin{proof}
These follow from the definitions of Dyck path, $R$-point, and $X$-point.
\end{proof}

Define the \emph{path-height function} to be 
\begin{equation}
P_n(a_1,\dots,a_m) := |\{(\pi^{(1)},\dots, \pi^{(m)}):h(\pi^{(i)})=a_i, \sum_{i=1}^m |\pi^{(i)}| = 2n\}|
\end{equation}
when $a_1,\dots, a_m\in \mathbb{Z}_{\geq 0}$, and $P_n(a_1,\dots,a_m)=0$ otherwise.  Notice that $P_n(a_1,\dots,a_m)$ is symmetric in $a_1,\dots, a_m$.  The function also has the recursive formulation
\begin{equation}
\label{Pn Calculation}
P_n(a_1,\dots, a_m) = \sum_{n_1+\dots n_m =n }\left( \prod_{i=1}^m P_{n_i}(a_i)\right),
\end{equation}
which provides a fast way to compute $P_n(a_1,\dots, a_m)$ electronically.  Since the empty path is the unique Dyck path of height 0, and has length 0,
\begin{equation}
\label{zero}
P_n(a_1,\dots, a_m) = P_n(a_1,\dots,a_m,0).
\end{equation}

$G_n(m,k)$ can be written as the sum of path-height functions as follows:
\begin{equation}
G_n(m,k) = \sum_{|a_i-a_j|\leq k} P_n(a_1,\dots, a_m).
\end{equation}

To proceed further, it is necessary to prove two lemmas and several corollaries related to the path-height function.

\begin{lemma}
\label{splitter}
For $a_m \geq 1$, 
\begin{equation}
\label{eqn:splitter}
P_n(a_1,\dots, a_{m-1},a_m)=  \sum_{y=1}^{a_m} P_n(a_1,\dots, a_{m-1},a_m-1,y).
\end{equation}
\end{lemma}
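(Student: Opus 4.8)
The plan is to prove \eqref{eqn:splitter} by a bijection (or rather an organizing decomposition) that reduces the statement to the single-path level. By \eqref{Pn Calculation}, both sides factor as sums of products $\prod_i P_{n_i}(a_i)$ over the paths indexed $1,\dots,m-1$, so it suffices to show that for each fixed $n_m$,
\begin{equation}
\label{singlepathsplit}
P_{n_m}(a_m) = \sum_{y=1}^{a_m} \sum_{p+q=n_m} P_p(a_m-1)P_q(y),
\end{equation}
i.e. that a single Dyck path $\pi$ of height exactly $a_m$ (with $a_m\ge 1$) and half-length $n_m$ decomposes canonically as a pair: a Dyck path of height exactly $a_m-1$ together with a Dyck path of height exactly $y$ for some $1\le y\le a_m$, the half-lengths adding to $n_m$. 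First I would set up this single-path reduction carefully, checking that the factorization of the $a_1,\dots,a_{m-1}$ part is literally identical on both sides so that only \eqref{singlepathsplit} remains.

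The heart of the argument is the bijection realizing \eqref{singlepathsplit}. The natural candidate uses the $R$-point machinery from the previous section: given $\pi$ with $h(\pi)=a_m$, the step at position $R(\pi)$ is the last up-step reaching the maximum height $a_m$, and by Lemma~\ref{trivial}(2) it is immediately followed by a down-step. The idea is to ``shave off the top'': delete the up-step at position $R(\pi)$ and the matching down-step (the one that returns from height $a_m$ to $a_m-1$), which lowers the peak region by one. More precisely, I expect the right move is to split $\pi$ at the first time it reaches height $a_m-1$ after... — actually the clean statement is: locate the portion of $\pi$ that ``lives above height $a_m-1$'' as an excursion, peel it off as a separate (shifted-down) Dyck path of some height $y\le a_m$, and what remains — with that excursion's two boundary steps deleted — is a Dyck path of height exactly $a_m-1$. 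I would verify that the remaining path indeed has height exactly $a_m-1$ (it cannot be lower, since before the peeled excursion $\pi$ must have climbed to height $a_m-1$; it cannot be higher by construction), and that the peeled path has height exactly $y$ for some $y$ in the stated range. The half-lengths add correctly because deleting an up-step and a down-step removes one from the total half-length count, matching $p+q=n_m$.

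To show this map is a bijection I would construct the inverse explicitly: given a Dyck path $\sigma$ of height exactly $a_m-1$ and a Dyck path $\tau$ of height exactly $y$ with $1\le y\le a_m$, reinsert $\tau$ (shifted up appropriately, bracketed by one up-step and one down-step) at the canonical location in $\sigma$ determined by the analogue of the $R$-point, and check the result has height exactly $a_m$. The bookkeeping showing that "split then merge" and "merge then split" are mutually inverse is routine once the canonical insertion point is pinned down using the maximality in the definition of $R(\pi)$; Lemma~\ref{trivial} is exactly what guarantees the relevant steps have the signs needed for the surgery to stay within the class of Dyck paths.

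The main obstacle I anticipate is pinning down the canonical decomposition point so that the height of the residual path is \emph{exactly} $a_m-1$ rather than merely $\le a_m-1$, and dually ensuring the inverse lands at height exactly $a_m$ — i.e. getting the "exactly" conditions on both coordinates to match up, rather than inequalities, which is what forces the appearance of the inner sum over $y$ from $1$ to $a_m$ (and the lower limit $y\ge 1$, consistent with Lemma~\ref{trivial}(3)). A secondary subtlety is the boundary behavior when $a_m=1$, where $a_m-1=0$ forces the residual path to be empty via \eqref{zero}, and one must check the formula degenerates correctly. Once the surgery location is fixed by the $R$-point, I expect everything else to be straightforward verification.
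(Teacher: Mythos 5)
Your reduction to the single-path identity $P_n(h)=\sum_{y=1}^{h}P_n(h-1,y)$ via \eqref{Pn Calculation} and the symmetry of $P_n$ is exactly right and matches the paper. The gap is that this single-path identity is the entire content of the lemma, and the surgery you propose for it does not work as described. If you peel off ``the portion of $\pi$ living above height $a_m-1$,'' two things go wrong. First, $\pi$ may have several maximal excursions above height $a_m-1$, more than one of which can reach height $a_m$ (e.g.\ the height-$2$ path $(1,1,-1,-1,1,1,-1,-1)$); the $R$-point only singles out the last such excursion, and removing it leaves a residual path that may still have height $a_m$ rather than exactly $a_m-1$. Second, any such excursion, shifted down by $a_m-1$, is a Dyck path of height exactly $1$; it cannot supply a second component of height $y$ for $y$ ranging over $1,\dots,a_m$, so the counts cannot match --- the right-hand side contains terms like $P_n(a_m-1,a_m)$ that a height-$1$ second component never produces. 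The inner sum over $y$ is not, as you suggest, an artifact of turning inequalities into equalities; $y$ records a genuinely different statistic.

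The paper's proof avoids all of this by quoting Theorem~2 of \cite{T2n}: for $h\geq 2$ there is a bijection between Dyck paths with $h(\pi)=h$ and $h_-(\pi)=h_-$ and pairs $(\pi^{(1)},\pi^{(2)})$ with $h(\pi^{(1)})=h_-$ and $h(\pi^{(2)})=h-1$ of the same total length. The index $y$ is the statistic $h_-(\pi)$, the maximum height attained up to the $X$-point, and the cut is made at the $X$-point, not at the $R$-point; summing over $h_-\in\{1,\dots,h\}$ gives the identity, with $a_m=1$ handled separately via \eqref{zero} because the cited bijection requires $h\geq 2$ (you correctly flagged this boundary case). If you want a self-contained argument in place of the citation, a generating-function route does work: with $G_h$ the generating function for Dyck paths of height at most $h$, the first-return decomposition gives $G_h-1=xG_hG_{h-1}$, hence $G_h-G_{h-1}=x(G_{h-1}-G_{h-2})G_hG_{h-1}$, which is equivalent to the identity. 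But as written, your proposal has no working construction at its core.
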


\begin{proof}
If $a_m=1$, then the lemma follows from Equation \ref{zero} and the symmetry of $P_n$.

Now assume $a_m>1$.  For any $h_-,h,n\in\mathbb{Z}_{>0}$ with $1\leq h_- \leq h$, $h\geq 2$, \cite[Theorem~2]{T2n} provides a bijection between the set $\{\pi:h_-(\pi) = h_-,h(\pi) = h, |\pi| = 2n\}$ and the set $\{(\pi^{(1)},\pi^{(2)}):h(\pi^{(1)})=h_-, h(\pi^{(2)}) = h-1, |\pi^{(1)}|+|\pi^{(2)}|=2n\}$.  As a result, there is a bijection between
\begin{equation*}
\bigcup_{h_- = 1}^h \{\pi:h_-(\pi) = h_-,h(\pi) = h, |\pi| = 2n\} = \{\pi:h(\pi) = h, |\pi| = 2n\}
\end{equation*}
and
\begin{align*}
\bigcup_{h_- = 1}^h \{(\pi^{(1)},\pi^{(2)}):h(\pi^{(1)})=h_-, h(\pi^{(2)}) = h-1, |\pi^{(1)}|+|\pi^{(2)}|=2n\}\\
= \{(\pi^{(1)},\pi^{(2)}):1\leq h(\pi^{(1)})\leq h, h(\pi^{(2)}) = h-1, |\pi^{(1)}|+|\pi^{(2)}|=2n\}.
\end{align*}

And so, as a result,
\begin{align*}
P_n(h) =&\  |\{\pi:h(\pi) = h, |\pi| = 2n\}|\\
=&\  |\{(\pi^{(1)},\pi^{(2)}):1\leq h(\pi^{(1)})\leq h, h(\pi^{(2)}) = h-1, |\pi^{(1)}|+|\pi^{(2)}|=2n\}| = \sum_{y=1}^h P_n(y,h-1).
\end{align*}

Combining this with Equation \ref{Pn Calculation} and the fact that the path-height function is symmetric completes the proof.
\end{proof}

\begin{corollary}
\label{cor:splitter2}
\begin{equation}
\label{splitter2}
\begin{split}
P_n(a_1,\dots, a_{m-1},a_m-1,a_m+1)=&\   P_n(a_1,\dots, a_{m-1},a_m,a_m)\\
&\ +P_n(a_1,\dots, a_{m-1},a_m-1,a_m,a_m+1)
\end{split}
\end{equation}
\end{corollary}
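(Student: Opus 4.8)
The plan is to apply Lemma~\ref{splitter} twice — once to expand the left side and once to expand the first term on the right — and then reconcile the two resulting sums using the symmetry of $P_n$ in its arguments. (Implicitly we take $a_m \geq 1$ here, since for $a_m = 0$ the left side of \eqref{splitter2} vanishes while the right side need not.)

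First I would expand the left-hand side. Its last argument is $a_m+1 \geq 1$, so Lemma~\ref{splitter} gives
\[
P_n(a_1,\dots, a_{m-1},a_m-1,a_m+1) = \sum_{y=1}^{a_m+1} P_n(a_1,\dots, a_{m-1},a_m-1,a_m,y).
\]
I would then peel off the top term $y = a_m+1$, which is exactly $P_n(a_1,\dots, a_{m-1},a_m-1,a_m,a_m+1)$ — the second summand on the right-hand side of \eqref{splitter2}. Thus it remains to show that the residual sum $\sum_{y=1}^{a_m} P_n(a_1,\dots, a_{m-1},a_m-1,a_m,y)$ equals $P_n(a_1,\dots, a_{m-1},a_m,a_m)$.

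For that, I would apply Lemma~\ref{splitter} to $P_n(a_1,\dots,a_{m-1},a_m,a_m)$, splitting off its last argument $a_m \geq 1$:
\[
P_n(a_1,\dots,a_{m-1},a_m,a_m) = \sum_{y=1}^{a_m} P_n(a_1,\dots,a_{m-1},a_m,a_m-1,y).
\]
Since $P_n$ is symmetric in its arguments, $P_n(a_1,\dots,a_{m-1},a_m,a_m-1,y) = P_n(a_1,\dots,a_{m-1},a_m-1,a_m,y)$ termwise, so this is precisely the residual sum above. Combining the two displays yields \eqref{splitter2}.

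The argument is short and essentially bookkeeping; the only thing to watch is tracking which coordinate Lemma~\ref{splitter} is applied to at each step, and confirming that after the symmetry rearrangement the two ranges $1 \le y \le a_m$ genuinely match up term by term. I do not anticipate a substantive obstacle beyond this careful matching.
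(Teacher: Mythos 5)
Your proposal is correct and is essentially identical to the paper's proof: both expand the left side via Lemma~\ref{splitter}, peel off the $y=a_m+1$ term, and recognize the remaining sum as $P_n(a_1,\dots,a_{m-1},a_m,a_m)$ via a second application of Lemma~\ref{splitter} together with the symmetry of $P_n$. Your explicit remark that $a_m\geq 1$ is implicitly required is a small but accurate point the paper leaves unstated.
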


\begin{proof}
\begin{align*}
P_n(a_1,\dots,& a_{m-1},a_m-1,a_m+1)\\
&\ =\sum_{y=1}^{a_m+1} P_n(a_1,\dots, a_{m-1},a_m-1,a_m,y) \\
&\ =P_n(a_1,\dots, a_{m-1},a_m-1,a_m,a_m+1)+ \sum_{y=1}^{a_m} P_n(a_1,\dots, a_{m-1},a_m-1,a_m,y) \\
&\ =P_n(a_1,\dots, a_{m-1},a_m-1,a_m,a_m+1)+ P_n(a_1,\dots, a_{m-1},a_m,a_m) \qedhere
\end{align*}\end{proof}

\begin{lemma}
\label{shorten}
For $n, a_m\in \mathbb{Z}_{>0}$,
\begin{align*}
P_n(a_1,\dots, a_{m-1},a_m)
=&\ P_{n-1}(a_1,\dots, a_{m-1},a_m-1) -P_{n-1}(a_1,\dots, a_{m-1},a_m,a_m-1)\\
&\ +2P_{n-1}(a_1,\dots, a_{m-1},a_m) -P_{n-1}(a_1,\dots, a_{m-1},a_m,a_m)\\
&\ +P_{n-1}(a_1,\dots, a_{m-1},a_m+1) -P_{n-1}(a_1,\dots, a_{m-1},a_m,a_m+1).
\end{align*}
\end{lemma}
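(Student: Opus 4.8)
The plan is to decompose the last path $\pi^{(m)}$ by its first return to height $0$, then resolve the resulting maximum condition by inclusion--exclusion, using Lemma \ref{splitter} and Equation \ref{zero} to collapse the sums that appear.

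Write $\vec a = (a_1,\dots,a_{m-1})$ and $N = n-1$. Recall the first-return decomposition of a nonempty Dyck path: a Dyck path $\rho$ of length $2k$ factors uniquely as the concatenation of $(1)$, a Dyck path $\sigma$, $(-1)$, and a Dyck path $\tau$, with $|\sigma| + |\tau| = 2k-2$, and under this factorization $h(\rho) = \max(h(\sigma)+1,\, h(\tau))$. Since $h(\pi^{(m)}) = a_m \geq 1$, the path $\pi^{(m)}$ is nonempty, so applying this to $\pi^{(m)}$ gives a bijection between the tuples counted by $P_n(\vec a, a_m)$ and the $(m+1)$-tuples $(\pi^{(1)},\dots,\pi^{(m-1)},\sigma,\tau)$ with $h(\pi^{(i)}) = a_i$ for $i<m$, total length $2N$, and $\max(h(\sigma)+1, h(\tau)) = a_m$.

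Next I would split the condition $\max(h(\sigma)+1, h(\tau)) = a_m$ by inclusion--exclusion: it is the event $\{h(\sigma) = a_m-1,\ h(\tau)\le a_m\}$ together with the event $\{h(\tau) = a_m,\ h(\sigma)\le a_m-1\}$, minus their intersection $\{h(\sigma) = a_m-1,\ h(\tau)=a_m\}$. The first event contributes $\sum_{b=0}^{a_m} P_N(\vec a, a_m-1, b)$; the $b=0$ term is $P_N(\vec a, a_m-1)$ by Equation \ref{zero}, and the terms with $b\geq 1$ sum to $P_N(\vec a, a_m)$ by Lemma \ref{splitter}. The intersection contributes $P_N(\vec a, a_m-1, a_m) = P_N(\vec a, a_m, a_m-1)$ by the symmetry of $P_N$. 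The crux is the second event, contributing $\sum_{b=0}^{a_m-1} P_N(\vec a, a_m, b)$: peeling off the $b=0$ term via Equation \ref{zero} leaves $P_N(\vec a, a_m) + \sum_{b=1}^{a_m-1} P_N(\vec a, a_m, b)$, and then I would apply Lemma \ref{splitter} in the form $\sum_{b=1}^{a_m+1} P_N(\vec a, a_m, b) = P_N(\vec a, a_m+1)$, so that $\sum_{b=1}^{a_m-1} P_N(\vec a, a_m, b) = P_N(\vec a, a_m+1) - P_N(\vec a, a_m, a_m) - P_N(\vec a, a_m, a_m+1)$.

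Assembling the three contributions with the inclusion--exclusion signs and substituting $N = n-1$ yields exactly the claimed identity. The one point needing care is the boundary case $a_m = 1$ (so $a_m-1 = 0$): there the range $\sum_{b=1}^{a_m-1}$ is empty and the height-$0$ path $\sigma = \epsilon$ is handled by Equation \ref{zero}, and one checks the remaining two terms cancel because $P_N(\vec a, a_m+1) = P_N(\vec a, a_m, a_m) + P_N(\vec a, a_m, a_m+1)$ when $a_m=1$, again by Lemma \ref{splitter}; so no separate argument is required. I expect the main obstacle to be precisely this second-event computation --- recognizing that one should invoke Lemma \ref{splitter} with upper limit $a_m+1$ and then correct by the two overshoot terms $P_N(\vec a, a_m, a_m)$ and $P_N(\vec a, a_m, a_m+1)$, rather than attempting to sum directly up to $a_m-1$.
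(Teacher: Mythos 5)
Your proof is correct, and it takes a genuinely different route from the paper's. The paper proves this lemma by a three-way case analysis on $a_m=1$, $a_m=2$, $a_m\geq 3$, further splitting on the first three steps $(\pi^{(m)}(1),\pi^{(m)}(2),\pi^{(m)}(3))$ and invoking the $X$-point bijections of \cite[Theorem~3]{T2n} to handle the $(1,1,1)$ case; the six terms of the identity emerge only after adding and subtracting auxiliary terms and expanding them via Lemma \ref{splitter}. You instead apply the classical first-return factorization $\pi^{(m)} = (1,\sigma,-1,\tau)$ with $h(\pi^{(m)})=\max(h(\sigma)+1,h(\tau))$, resolve the max by inclusion--exclusion, and collapse each of the three resulting sums using only Equation (\ref{zero}), the symmetry of $P_n$, and Lemma \ref{splitter} (applied once with upper limit $a_m$ and once with upper limit $a_m+1$, correcting by the two overshoot terms). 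I checked the bookkeeping: $|A|=P_{n-1}(\vec a,a_m-1)+P_{n-1}(\vec a,a_m)$, $|B|=P_{n-1}(\vec a,a_m)+P_{n-1}(\vec a,a_m+1)-P_{n-1}(\vec a,a_m,a_m)-P_{n-1}(\vec a,a_m,a_m+1)$, and $|A\cap B|=P_{n-1}(\vec a,a_m,a_m-1)$ assemble to exactly the claimed right-hand side, and your boundary check at $a_m=1$ (the empty range and the cancellation $P_{n-1}(\vec a,2)=P_{n-1}(\vec a,1,1)+P_{n-1}(\vec a,1,2)$) is the only degenerate point and is handled correctly. There is no circularity, since Lemma \ref{splitter} is established before this lemma. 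What your approach buys is uniformity (no case split on $a_m$ or on initial steps) and independence from \cite[Theorem~3]{T2n}; what the paper's approach buys is consistency with the $R$-point/$X$-point machinery it imports from \cite{T2n}, at the cost of a longer argument.
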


\begin{proof}
This proof is broken up into three cases depending on whether $a_m=1$, $a_m=2$, or $a_m\geq 3$.

${\sf Case\ 1}:(a_m =1)$  Consider an arbitrary tuple of Dyck paths $(\pi^{(1)},\dots, \pi^{(m)})$ with total length $2n$ and respective heights $a_1,\dots, a_m$ such that $a_m = 1$.  Since $h(\pi^{(m)})=1$, $\pi^{(m)}(1)=1$ and $\pi^{(m)}(2)=-1$, so removing the first two steps of $\pi^{(m)}$ results in a new Dyck path, denoted $\pi^{(m)'}$, with the properties that $|\pi^{(m)'}|=|\pi^{(m)}|-2$ and $h(\pi^{(m)'})=0$ or 1.  This process is an injection, and it is reversible, as any path of height 0 or 1 can have $(1, -1)$ prepended to it to result in a unique path of length 1.  Since the map $\pi^{(m)} \mapsto \pi^{(m)'}$ is a bijection, so is $(\pi^{(1)},\dots, \pi^{(m-1)}, \pi^{(m)})\mapsto (\pi^{(1)},\dots, \pi^{(m-1)}, \pi^{(m)'})$, and so there is a bijection between 
\begin{align*}
\{(\pi^{(1)},\dots, \pi^{(m)}):\sum_{i=1}^m |\pi^{(i)}| = 2n,h(\pi^{(i)})=a_i, h(\pi^{(m)}) = 1\}
\end{align*}
and
\begin{align*}
\{(\pi^{(1)},\dots, \pi^{(m-1)}, \pi^{(m)'}):\sum_{i=1}^{m-1} |\pi^{(i)}|  + |\pi^{(m)'}|= 2n-2,h(\pi^{(i)})=a_i, 0\leq h(\pi^{(m)'})\leq 1\}.
\end{align*}
By definition, the size of the first set is $P_n(a_1,\dots, a_{m-1},1)$, and the size of the second set is $P_{n-1}(a_1,\dots, a_{m-1},1) + P_{n-1}(a_1,\dots, a_{m-1},0)$.  As a result,
\begin{align*}
P_n(a_1,\dots, a_{m-1},1)=&\ P_{n-1}(a_1,\dots, a_{m-1},1) + P_{n-1}(a_1,\dots, a_{m-1})\\
=&\ P_{n-1}(a_1,\dots, a_{m-1},1) + P_{n-1}(a_1,\dots, a_{m-1},0)\\
 &\ + P_{n-1}(a_1,\dots, a_{m-1},1) - P_{n-1}(a_1,\dots, a_{m-1},1)\\
 &\ +P_{n-1}(a_1,\dots, a_{m-1},2) - P_{n-1}(a_1,\dots, a_{m-1},2).
\end{align*}
Applying Lemma \ref{splitter} to the last term results in
\begin{align*}
P_n(a_1,\dots, a_{m-1},1)=&\ P_{n-1}(a_1,\dots, a_{m-1},1) + P_{n-1}(a_1,\dots, a_{m-1},0)\\
 &\ + P_{n-1}(a_1,\dots, a_{m-1},1) - P_{n-1}(a_1,\dots, a_{m-1},0,1)\\
 &\ +P_{n-1}(a_1,\dots, a_{m-1},2)\\
 &\ - (P_{n-1}(a_1,\dots, a_{m-1},1,1) + P_{n-1}(a_1,\dots, a_{m-1},1,2))\\ 
=&\ 2P_{n-1}(a_1,\dots, a_{m-1},1) + P_{n-1}(a_1,\dots, a_{m-1},0)+ P_{n-1}(a_1,\dots, a_{m-1},2)\\
  &\ - P_{n-1}(a_1,\dots, a_{m-1},1,0)\\
  &\ - P_{n-1}(a_1,\dots, a_{m-1},1,1) - P_{n-1}(a_1,\dots, a_{m-1},1,2),
\end{align*}
  
  which proves this case.

${\sf Case\ 2}:(a_m =2)$ Consider an arbitrary tuple of Dyck paths $(\pi^{(1)},\dots, \pi^{(m)})$ with total length $2n$ and respective heights $a_1,\dots, a_m$ such that $a_m = 2$.  Since $h(\pi^{(m)})=2$,  $(\pi^{(m)}(1),\pi^{(m)}(2),\pi^{(m)}(3)) = (1,-1,1)$ or $(1,1,-1)$.  If $(\pi^{(m)}(1),\pi^{(m)}(2),\pi^{(m)}(3)) = (1,-1,1)$, then as in Case 1, removing the first two steps of $\pi^{(m)}$ is a bijection between these Dyck paths and Dyck paths with length $|\pi^{(m)}|-2$ and height 2.  If instead $(\pi^{(m)}(1),\pi^{(m)}(2),\pi^{(m)}(3)$) is $(1,1,-1)$, then the bijection is $\pi^{(m)} \mapsto (1,\pi^{(m)}(4),\dots,\pi^{(m)}(|\pi_m|))$.  In this case, the image of this bijection is Dyck paths of length $|\pi^{(m)}|-2$ and height 1 or 2.  Clearly, if $\pi^{(m)}$ is the last element of an $m$-tuple, the mapping is still bijective, so
\begin{align*}
|\{(&\pi^{(1)},\dots, \pi^{(m-1)}, \pi^{(m)}):\sum_{i=1}^m |\pi^{(i)}| = 2n,h(\pi^{(i)})=a_i, h(\pi^{(m)}) = 2\}|\\
=&\ |\{(\pi^{(1)},\dots, \pi^{(m-1)},\pi^{(m)'}):\sum_{i=1}^{m-1} |\pi^{(i)}| + |\pi^{(m)'}|= 2n-2,h(\pi^{(i)})=a_i, h(\pi^{(m)'}) = 2\}|\\
&\ +|\{(\pi^{(i)},\dots, \pi^{(m-1)},\pi^{(m)'}):\sum_{i=1}^{m-1} |\pi^{(i)}| + |\pi^{(m)'}|= 2n-2,h(\pi^{(i)})=a_i, 1\leq h(\pi^{(m)'}) \leq 2\}|
\end{align*}
which by definition means that
\begin{align*}
P_n(a_1,\dots, a_{m-1},2)=&\ 2P_{n-1}(a_1,\dots, a_{m-1},2) + P_{n-1}(a_1,\dots, a_{m-1},1)\\
=&\ 2P_{n-1}(a_1,\dots, a_{m-1},2) + P_{n-1}(a_1,\dots, a_{m-1},1)\\
&\ + P_{n-1}(a_1,\dots, a_{m-1},3) - P_{n-1}(a_1,\dots, a_{m-1},3).
\end{align*}
 Applying Lemma \ref{splitter} to the last term of this expression results in:
\begin{align*}
P_n(a_1,\dots, a_{m-1},2) =&\ 2P_{n-1}(a_1,\dots, a_{m-1},2) + P_{n-1}(a_1,\dots, a_{m-1},1)\\
&\ + P_{n-1}(a_1,\dots, a_{m-1},3)- P_{n-1}(a_1,\dots, a_{m-1},2,1)\\
&\  - P_{n-1}(a_1,\dots, a_{m-1},2,2) - P_{n-1}(a_1,\dots, a_{m-1},2,3)
\end{align*}
 which completes this case.

${\sf Case\ 3}:(a_m \geq 3)$ Let $\pi^{(m)}$ be an arbitrary Dyck path such that $h(\pi^{(m)})= a_m\geq 3$.  There are three possiblities for $(\pi^{(m)}(1),\pi^{(m)}(2),\pi^{(m)}(3))$:  either $(1,-1,1)$, $(1,1,-1)$ or $(1,1,1)$.

For the first option, removing the first two steps results in a Dyck path that has height $h(\pi^{(m)})$ and has length $|\pi^{(m)}| -2$.  Likewise, for the second option, removing the second and third steps also results a Dyck path that has the same height $h(\pi^{(m)})$ and has length $|\pi^{(m)}| -2$.  Both of these processes are reversible, and so just as in the previous two cases, the number of $m$-tuples of paths $(\pi^{(1)}, \dots, \pi^{(m)})$ such that for each $1\leq i \leq m$, $h(\pi^{(i)})= a_i$ and $\pi^{(m)}$ starts with either $(1,1,-1)$ or $(1,-1,1)$ is equal to $2P_{n-1}(a_1,\dots, a_{m-1},a_m)$.

Now consider an arbitrary $m$-tuple of paths $(\pi^{(1)}, \dots, \pi^{(m)})$ such that for each $1\leq i \leq m$, $h(\pi^{(i)})= a_i$ and $\pi^{(m)}(1)=\pi^{(m)}(2)=\pi^{(m)}(3) = 1$.  There are two possibilities here to consider: either $X(\pi^{(m)}) = 1$, or $X(\pi^{(m)}) \geq 5$.  In the former of these two cases, \cite[Theorem~3]{T2n} has a bijection between these Dyck paths and Dyck paths of height $h(\pi^{(m)})-1$ and length $|\pi^{(m)}|-2$, so the number of $m$-tuples of Dyck paths in this case is $P_{n-1}(a_1,\dots, a_{m-1},a_m-1)$.

In the final case, \cite[Theorem~3]{T2n} provides a bijection between Dyck paths $\pi$ such that $\pi(1)=\pi(2)=\pi(3) = 1$ and $X(\pi)\geq 5$ to paths $\pi'$ such that $|\pi'| = |\pi|-2$, $h(\pi')=h(\pi)+1$, and $h_-(\pi')<h(\pi')-2$. 

As a result, the number of $m$-tuples in this case equals $P_{n-1}(a_1,\dots, a_{m-1},a_m+1)$ minus the the number of $m$-tuples of Dyck paths counted by $P_{n-1}(a_1,\dots, a_{m-1},a_m+1)$ such that $h(\pi^{(m)})-2\leq h_-(\pi^{(m)})\leq h(\pi^{(m)})$.  According to the bijection mentioned in the proof for Lemma \ref{splitter}, the number of Dyck paths that are being subtracted is equal to $\sum_{y=a_m-1}^{a_m+1}P_{n-1}(a_1,\dots, a_{m-1},a_m,y)$, and so in total
\begin{align*}
P_n(a_1,\dots, a_{m-1},a_m)=&\ 2P_{n-1}(a_1,\dots, a_{m-1},a_m)\\
  &\ + P_{n-1}(a_1,\dots, a_{m-1},a_m-1)\\
  &\ + P_{n-1}(a_1,\dots, a_{m-1},a_m+1)\\
  &\ - \sum_{y=a_m-1}^{a_m+1}P_{n-1}(a_1,\dots, a_{m-1},a_m,y).  \qedhere
\end{align*}
\end{proof}

Note that because the Path-Height function is symmetric, either of the above lemmas or the related corollaries can be applied to any value in the argument, rather than just the final value.  For example, $P_n(3,4) = \sum_{z=1}^4 P_n(3,3,z) = \sum_{y=1}^3 P_n(2,y,4)$.

\section{The Grouped Path-Height Function}

Sometimes when discussing tuples of Dyck paths, the heights of the Dyck paths are less important than the relative values of the heights.  To that end, for $a_0,\dots a_k\in \mathbb{Z}_{\geq 0}$, define the the following intermediary function:
\begin{equation}
Q_n^{(x)}(a_0,\dots, a_k) := P_n(x,\dots, x, x+1,\dots, x+1,\dots, x+k, \dots, x+k)
\end{equation}
where the first $a_0$ inputs on the right-hand side are $x$, the next $a_1$ inputs are $x+1$, and so on.  If $a_i\not\in \mathbb{Z}_{\geq 0}$ for some $0\leq i\leq k$, say the function is zero.  From here, define the \emph{grouped path-height function} to be
\begin{equation}
Q_n(a_0,\dots, a_k) = \sum_{x=0}^\infty Q_n^{(x)}(a_0,\dots, a_k).
\end{equation}
For example, $Q_n(1,0,2) = \sum_{x=0}^\infty P_n(x,x+2,x+2)$.  Many of the results that for $P_n$ can be extended to $Q_n$:
\begin{lemma}
\label{QBasic}
\begin{enumerate}
\item
\begin{equation*}
Q_n(a_0,\dots, a_k,0) = Q_n(a_0,\dots, a_k)
\end{equation*}
\item
\begin{equation*}
Q_n(0,a_0,\dots, a_k) = \sum_{x=1}^\infty Q_n^{(x)}(a_0,\dots, a_k) = Q_n(a_0,\dots, a_k) - Q_n^{(0)}(a_0,\dots, a_k)
\end{equation*}
\item
\begin{equation*}
Q^{(0)}_n(a_0,a_1,\dots) = Q^{(0)}_n(0,a_1,\dots) = Q^{(1)}_n(a_1,\dots)
\end{equation*}
\end{enumerate}
\end{lemma}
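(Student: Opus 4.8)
The plan is to derive all three parts directly from the definition of $Q_n^{(x)}$, using nothing beyond Equation~(\ref{zero}) and the symmetry of $P_n$ in its arguments. The only combinatorial input, already recorded in the excerpt, is that the empty path is the unique Dyck path of height $0$; everything else is bookkeeping about which slot of an argument tuple corresponds to which height.

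First I would unwind two elementary identities at the level of $Q_n^{(x)}$. On one hand, $Q_n^{(x)}(a_0,\dots,a_k,0)$ attaches $0$ further copies of height $x+k+1$, so its defining $P_n$-tuple is literally the same tuple as that of $Q_n^{(x)}(a_0,\dots,a_k)$; hence $Q_n^{(x)}(a_0,\dots,a_k,0)=Q_n^{(x)}(a_0,\dots,a_k)$, and summing over $x\geq 0$ proves part~(1). On the other hand, $Q_n^{(x)}(0,a_0,\dots,a_k)$ has $0$ copies of $x$ followed by $a_0$ copies of $x+1$, then $a_1$ copies of $x+2$, and so on; this is precisely the $P_n$-tuple defining $Q_n^{(x+1)}(a_0,\dots,a_k)$, so $Q_n^{(x)}(0,a_0,\dots,a_k)=Q_n^{(x+1)}(a_0,\dots,a_k)$.

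Part~(2) then follows by summing the second identity over $x\geq 0$: $Q_n(0,a_0,\dots,a_k)=\sum_{x\geq 0}Q_n^{(x+1)}(a_0,\dots,a_k)=\sum_{x\geq 1}Q_n^{(x)}(a_0,\dots,a_k)$, and the last sum equals $Q_n(a_0,\dots,a_k)-Q_n^{(0)}(a_0,\dots,a_k)$ upon restoring the missing $x=0$ term, which gives both equalities in~(2). For part~(3), expanding the definition shows $Q_n^{(0)}(a_0,a_1,\dots)$ equals $P_n$ evaluated on a tuple of $a_0$ zeros followed by $a_1$ ones, $a_2$ twos, and so forth; invoking Equation~(\ref{zero}) together with the symmetry of $P_n$, once for each of the $a_0$ zero entries, deletes all of them, so this value equals $P_n$ on the tuple of $a_1$ ones, $a_2$ twos, \dots{} and in particular does not depend on $a_0$. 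Setting $a_0=0$ yields the middle expression $Q_n^{(0)}(0,a_1,\dots)$, and recognizing that same $P_n$-tuple as the one produced by the definition of $Q_n^{(1)}(a_1,\dots)$ (base height $1$, with $a_1$ copies of $1$, $a_2$ copies of $2$, and so on) gives the last expression and completes~(3).

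I do not expect a genuine obstacle here. The only points that need care are the off-by-one in the index shift --- making sure that in $(a_0,\dots,a_k)$ the entry $a_i$ tracks height $x+i$ under $Q_n^{(x)}$ --- and writing the ``delete every zero'' step in part~(3) as an honest induction on $a_0$ rather than eliding it. Parts (1)--(3) could be packaged into a single computation, but splitting them this way keeps each index shift transparent.
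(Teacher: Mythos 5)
Your proposal is correct and matches the paper's approach: the paper's entire proof is ``these follow from the definition of the grouped path-height function,'' and your argument is precisely that definition-unwinding (appending zero copies changes nothing, prepending zero copies shifts the base height, and Equation~(\ref{zero}) with symmetry deletes the height-$0$ entries for part~(3)). No gaps.
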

\begin{proof}
These follow from the definition of the grouped path-height function.
\end{proof}

\begin{lemma}
\label{QSplitter2}
For $i>0$ and $a_{i-1},a_{i+1}>0$,
\begin{equation}
Q_n(a_0,\dots) = Q_n(a_0,\dots,a_{i-1}-1, a_i+2,a_{i+1}-1,\dots) + Q_n(a_0,\dots,a_{i-1}, a_i+1,a_{i+1},\dots).
\end{equation}
\end{lemma}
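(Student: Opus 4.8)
The identity is the grouped-path-height analogue of Corollary \ref{cor:splitter2}, so the plan is to reduce it to that corollary one ``level shift'' at a time. Concretely, I would fix $x \ge 0$, prove the refined identity
\[
Q_n^{(x)}(a_0,\dots) = Q_n^{(x)}(a_0,\dots,a_{i-1}-1, a_i+2,a_{i+1}-1,\dots) + Q_n^{(x)}(a_0,\dots,a_{i-1}, a_i+1,a_{i+1},\dots),
\]
and then sum over $x = 0, 1, 2, \dots$; since the argument list keeps the same length in all three terms, $\sum_x Q_n^{(x)} = Q_n$ turns this into the claim.

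For the refined identity, set $c := x+i$. First I would unfold the definition: $Q_n^{(x)}(a_0,\dots)$ is $P_n$ evaluated on a list of arguments containing $a_{i-1}$ copies of $c-1$, $a_i$ copies of $c$, $a_{i+1}$ copies of $c+1$, and other entries using only the remaining levels among $x,\dots,x+k$. Because $i > 0$ we have $c \ge 1$; because $a_{i-1},a_{i+1} > 0$, at least one entry equals $c-1$ and at least one equals $c+1$. Using the symmetry of $P_n$ (noted after Lemma \ref{shorten}) I would move one copy of $c-1$ and one copy of $c+1$ to the last two positions, leaving a prefix $\text{rest}$ that still contains $a_{i-1}-1$ copies of $c-1$, all $a_i$ copies of $c$, $a_{i+1}-1$ copies of $c+1$, and all other levels. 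Corollary \ref{cor:splitter2}, with this trailing pair $(c-1,c+1)$ in the role of $(a_m-1,a_m+1)$ --- legitimate exactly because $a_m = c \ge 1$ --- yields
\[
P_n(\text{rest}, c-1, c+1) = P_n(\text{rest}, c, c) + P_n(\text{rest}, c-1, c, c+1).
\]
Re-folding, the first summand has $a_{i-1}-1$ copies of $c-1$, $a_i+2$ copies of $c$, and $a_{i+1}-1$ copies of $c+1$, hence equals $Q_n^{(x)}(a_0,\dots,a_{i-1}-1,a_i+2,a_{i+1}-1,\dots)$, and the second has $a_{i-1}$ copies of $c-1$, $a_i+1$ copies of $c$, and $a_{i+1}$ copies of $c+1$, hence equals $Q_n^{(x)}(a_0,\dots,a_{i-1},a_i+1,a_{i+1},\dots)$, which is the refined identity.

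I do not expect a genuine obstacle: the content is entirely in Corollary \ref{cor:splitter2}, and the remaining work is bookkeeping. The one place to be careful is matching hypotheses to the corollary's requirements --- $i>0$ is precisely what guarantees $c = x+i \ge 1$ for every $x$ in the sum (so that Corollary \ref{cor:splitter2} applies in each summand, including $x=0$, where it reduces to Equation \ref{zero} plus Lemma \ref{splitter}), and $a_{i-1},a_{i+1}>0$ is precisely what lets us peel off one copy each of $c-1$ and $c+1$ --- together with tracking multiplicities correctly when translating back and forth between $P_n$ and $Q_n^{(x)}$ (in particular when $a_{i-1}-1$ or $a_{i+1}-1$ equals $0$, which is harmless since $Q_n^{(x)}$ simply omits that level).
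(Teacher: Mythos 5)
Your proposal is correct and follows the same route as the paper, whose entire proof of Lemma \ref{QSplitter2} is the single sentence ``This follows from Corollary \ref{cor:splitter2}.'' You have simply supplied the bookkeeping the paper omits --- working level-by-level with $Q_n^{(x)}$, invoking the symmetry of $P_n$, and checking that $i>0$ and $a_{i-1},a_{i+1}>0$ are exactly the hypotheses needed to apply the corollary --- all of which is accurate.
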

\begin{proof}
This follows from Corollary \ref{cor:splitter2}.
\end{proof}

\begin{lemma}
\label{QShorten}
Fix $i,x\geq  0$ and let $a_i>0$.
\begin{enumerate}
\item
If $i>0$,
\begin{align*}
Q^{(x)}_n(a_0,\dots) =&\  2Q^{(x)}_{n-1}(a_0,\dots)\\
&\ + Q^{(x)}_{n-1}(\dots, a_{i-1}+1, a_i -1, \dots) + Q^{(x)}_{n-1}(\dots, a_i-1, a_{i+1} +1, \dots)\\
&\ - Q^{(x)}_{n-1}(\dots, a_{i-1}+1, \dots) -  Q^{(x)}_{n-1}(\dots,a_i + 1,\dots)- Q^{(x)}_{n-1}(\dots, a_{i+1} +1, \dots)
\end{align*}
\item
If $i=0$ and $x>0$,
\begin{align*}
Q^{(x)}_n(a_0,\dots) =&\  2Q^{(x)}_{n-1}(a_0,\dots)\\
&\ + Q^{(x-1)}_{n-1}(1, a_0 -1, \dots) + Q^{(x)}_{n-1}( a_0-1, a_{1} +1, \dots)\\
&\ - Q^{(x-1)}_{n-1}(1, a_0, \dots) -  Q^{(x)}_{n-1}(a_0 + 1,\dots)- Q^{(x)}_{n-1}(a_0, a_{1} +1, \dots)
\end{align*}
\end{enumerate}
\end{lemma}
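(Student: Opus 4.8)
The plan is to deduce both parts directly from Lemma~\ref{shorten}, applied — via the symmetry of $P_n$ noted right after that lemma — to a single argument of height $b:=x+i$. First I would recall that $Q^{(x)}_n(a_0,\dots,a_k)$ is the value of $P_n$ on the height-vector consisting of $a_0$ copies of $x$, then $a_1$ copies of $x+1$, and so on, ending with $a_k$ copies of $x+k$. Since $a_i>0$ this vector has at least one entry equal to $b$, and $b\ge1$ in both cases (when $i>0$ because $i\ge1$; when $i=0$ because $x\ge1$ by hypothesis), so Lemma~\ref{shorten} applies to that entry. Throughout, any $Q^{(x)}_n(\dots)$ whose written coordinates run past the end of the original tuple is read by padding with trailing zeros, which does not change the value; and if $n=0$ both sides vanish, so we may assume $n\ge1$.

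Applying Lemma~\ref{shorten} to one entry of height $b$ rewrites $Q^{(x)}_n(a_0,\dots)$ as a signed sum of six $P_{n-1}$-values. After merging back into the remaining $a_i-1$ copies of $b$ the extra copy of $b$ produced by the second, fourth and sixth terms of Lemma~\ref{shorten}, each of the six resulting height-multisets is obtained from the original one by one of the following moves, all heights other than $b-1,b,b+1$ being left untouched: (a) multiplicity of $b$ down by $1$, multiplicity of $b-1$ up by $1$, with sign $+$; (b) multiplicity of $b-1$ up by $1$, sign $-$; (c) the identity, with coefficient $2$; (d) multiplicity of $b$ up by $1$, sign $-$; (e) multiplicity of $b$ down by $1$, multiplicity of $b+1$ up by $1$, sign $+$; (f) multiplicity of $b+1$ up by $1$, sign $-$.

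When $i>0$, the heights $b-1=x+(i-1)$ and $b+1=x+(i+1)$ are still among the levels $x,x+1,\dots$, so each of the six $P_{n-1}$-values is literally a $Q^{(x)}_{n-1}$ whose argument vector is the original one with its $i$-th and $(i\pm1)$-st coordinates modified exactly as in (a)--(f); matching these against the six summands of the first claim finishes that case. When $i=0$ (so $x\ge1$), $b=x$ and $b-1=x-1$ is a \emph{new} bottom level; the two terms (a) and (b), which create a copy of $x-1$, must therefore be re-expressed in base $x-1$, i.e.\ as $Q^{(x-1)}_{n-1}(1,\dots)$ with an inserted leading coordinate $1$, while (c)--(f) stay in base $x$. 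This rebasing is precisely where the hypothesis $x>0$ is needed: it makes $x-1\ge0$ and lets Lemma~\ref{shorten} be applied to the argument $x$ in the first place. Matching term by term against the six summands of the second claim completes the proof.

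I anticipate no genuine difficulty: everything is the bookkeeping of translating the flat height-multiset notation of $P_n$ into the grouped notation of $Q^{(x)}_n$, with Lemma~\ref{shorten} doing the real work. The one spot that demands attention is the case $i=0$, where the auxiliary height $x-1$ drops below the current base and forces the simultaneous reindexing $x\mapsto x-1$ and insertion of the coordinate $1$; checking that this produces exactly the claimed right-hand side is the main — and fairly mild — obstacle.
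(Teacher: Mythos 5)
Your argument is correct and takes exactly the route the paper intends: the paper's entire proof of this lemma is the one-line remark that it follows from Lemma~\ref{shorten}. Your term-by-term translation of that recursion's six summands into the grouped notation --- including the padding by trailing zeros and the rebasing to $Q^{(x-1)}$ with an inserted leading coordinate $1$ when $i=0$ --- is precisely the bookkeeping the authors left implicit, carried out without error.
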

\begin{proof}
This follows from Lemma \ref{shorten}.
\end{proof}

\begin{corollary}
\label{QShortencor1}
If $i,a_i>0$, 
\begin{align*}
Q_n(a_0,\dots) =&\  2Q_{n-1}(a_0,\dots)\\
&\ + Q_{n-1}(\dots, a_{i-1}+1, a_i -1, \dots) + Q_{n-1}(\dots, a_i-1, a_{i+1} +1, \dots)\\
&\ - Q_{n-1}(\dots, a_{i-1}+1, \dots) -  Q_{n-1}(\dots,a_i + 1,\dots)- Q_{n-1}(\dots, a_{i+1} +1, \dots)
\end{align*}
\end{corollary}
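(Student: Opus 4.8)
The plan is to derive Corollary~\ref{QShortencor1} simply by summing the identity of Lemma~\ref{QShorten}(1) over all base heights $x$. Recall that by definition $Q_n(a_0,\dots)=\sum_{x=0}^{\infty}Q_n^{(x)}(a_0,\dots)$, and note that this sum is actually finite: a tuple of Dyck paths of total length $2n$ has each component of height at most $n$, so $Q_n^{(x)}(a_0,\dots,a_k)=0$ as soon as $x>n$. Consequently every series appearing in the argument is a finite sum, and no convergence or reordering concern arises.

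First I would note that the hypotheses of Lemma~\ref{QShorten}(1) are precisely $i>0$ and $a_i>0$, with no constraint on $x$; these are exactly the hypotheses of Corollary~\ref{QShortencor1}. Hence, under those hypotheses, the displayed identity of Lemma~\ref{QShorten}(1) is valid for every integer $x\ge 0$ at once. (It is the instance of Lemma~\ref{shorten} obtained by selecting one of the $a_i\ge 1$ entries equal to $x+i$, which is legitimate because $x+i\ge 1$ when $i\ge 1$.)

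Then I would sum that identity over $x=0,1,2,\dots$. On the left, $\sum_x Q_n^{(x)}(a_0,\dots)=Q_n(a_0,\dots)$. On the right, each of the six terms has a fixed pattern of increments and decrements applied to the argument list while the remaining $a_j$ stay put, so $\sum_{x\ge 0}Q_{n-1}^{(x)}(\cdots)=Q_{n-1}(\cdots)$ for that modified list, again by the definition of $Q_{n-1}$. Collecting the terms yields exactly $2Q_{n-1}(a_0,\dots)$, plus $Q_{n-1}(\dots,a_{i-1}+1,a_i-1,\dots)$ and $Q_{n-1}(\dots,a_i-1,a_{i+1}+1,\dots)$, minus $Q_{n-1}(\dots,a_{i-1}+1,\dots)$, $Q_{n-1}(\dots,a_i+1,\dots)$ and $Q_{n-1}(\dots,a_{i+1}+1,\dots)$, which is the claim.

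There is no genuinely hard step. The only thing worth a sentence of care is the boundary behavior: when $i=1$ the block $a_{i-1}=a_0$ sits at base height $x$ (so a ``decrement'' of $a_1$ may push a path all the way to height $0$), and when $a_i=1$ the quantity $a_i-1$ equals $0$. Both are harmless: $Q_{n-1}$ is defined on all nonnegative-integer argument lists and is insensitive to trailing zero entries by Lemma~\ref{QBasic}(1) (ultimately Equation~\ref{zero}), and in any case Lemma~\ref{QShorten}(1) was already stated and proved to include these situations, so nothing new needs to be checked here.
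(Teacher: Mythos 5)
Your proof is correct and is exactly the route the paper takes: the paper's entire proof is ``this follows from the definition of $Q_n$ and Lemma \ref{QShorten},'' i.e.\ summing Lemma \ref{QShorten}(1) over all base heights $x\ge 0$. Your added remarks on finiteness of the sum and the boundary cases are sound but not needed beyond what the cited lemma already covers.
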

\begin{proof}
This follows from the definition of $Q_n$ and Lemma \ref{QShorten}
\end{proof}

\begin{corollary}
\label{QShortencor2}
If $a>0$, 
\begin{align*}
Q_n(a) =&\   2Q_{n-1}(a)+ Q_{n-1}(1,a-1) + Q_{n-1}(a-1,1)\\
&\ - Q_{n-1}(1,a)-  Q_{n-1}(a+1)- Q_{n-1}(a,1)
\end{align*}
\end{corollary}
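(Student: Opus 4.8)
The plan is to reduce this to Corollary~\ref{QShortencor1}. The obstruction is that $Q_n(a)$ carries only the single group $a_0=a$, whereas Corollary~\ref{QShortencor1} shortens a group with index $i>0$, so it does not apply directly. First I would manufacture a legal index by prepending a zero: Lemma~\ref{QBasic}(2) gives
\[
Q_n(a)=Q_n(0,a)+Q_n^{(0)}(a),
\]
and in $Q_n(0,a)$ the index $i=1$ is now admissible since $a_1=a>0$. Applying Corollary~\ref{QShortencor1} to $Q_n(0,a)$ at $i=1$ (and discarding trailing zeros via Lemma~\ref{QBasic}(1)) rewrites $Q_n(0,a)$ as
\[
2Q_{n-1}(0,a)+Q_{n-1}(1,a-1)+Q_{n-1}(0,a-1,1)-Q_{n-1}(1,a)-Q_{n-1}(0,a+1)-Q_{n-1}(0,a,1).
\]

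The next step is bookkeeping. Using Lemma~\ref{QBasic}(2) in the form $Q_{n-1}(0,c_1,\dots,c_k)=Q_{n-1}(c_1,\dots,c_k)-Q_{n-1}^{(0)}(c_1,\dots,c_k)$, I would peel the leading zero off each of $Q_{n-1}(0,a)$, $Q_{n-1}(0,a-1,1)$, $Q_{n-1}(0,a+1)$, and $Q_{n-1}(0,a,1)$, leaving $Q_{n-1}(1,a-1)$ and $Q_{n-1}(1,a)$ as they are. Substituting this back and moving $Q_n^{(0)}(a)$ to the other side, the ``principal'' contributions collect into exactly
\[
2Q_{n-1}(a)+Q_{n-1}(1,a-1)+Q_{n-1}(a-1,1)-Q_{n-1}(1,a)-Q_{n-1}(a+1)-Q_{n-1}(a,1),
\]
which is the desired right-hand side, and what is left over is the ``boundary'' quantity
\[
E:=Q_n^{(0)}(a)-2Q_{n-1}^{(0)}(a)-Q_{n-1}^{(0)}(a-1,1)+Q_{n-1}^{(0)}(a+1)+Q_{n-1}^{(0)}(a,1).
\]

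It remains to verify $E=0$, which is the only delicate point. By Lemma~\ref{QBasic}(3) and Equation~\ref{zero}, each $Q_m^{(0)}$ occurring in $E$ collapses to a one-letter path-height value: $Q_m^{(0)}(a)=Q_m^{(0)}(a+1)=P_m(0)$, while $Q_m^{(0)}(a-1,1)=Q_m^{(0)}(a,1)=Q_m^{(1)}(1)=P_m(1)$. Thus the two $P_{n-1}(1)$ terms cancel and $E=P_n(0)-P_{n-1}(0)$; since the empty path is the unique Dyck path of height $0$ and has length $0$, we have $P_m(0)=0$ for $m\ge1$, so $E=0$ whenever $n\ge2$. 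The upshot is that the argument is essentially forced once the leading zero is introduced; the only real work is keeping the signs and the $x=0$ boundary contributions straight through the two invocations of Lemma~\ref{QBasic}.
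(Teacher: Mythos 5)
Your proof is correct and follows essentially the same route as the paper's: both pad $Q_n(a)$ to $Q_n(0,a,0)$, apply Corollary \ref{QShortencor1} at the middle index, and strip the leading zeros via Lemma \ref{QBasic}, with the two $Q^{(0)}_{n-1}(\cdot,1)$ boundary terms cancelling. Your explicit tracking of the residual $E=P_n(0)-P_{n-1}(0)$ is in fact slightly more careful than the paper, which silently discards those contributions (they vanish only for $n\ge 2$, a restriction the statement implicitly needs).
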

\begin{proof}
$Q_n(a) = Q^{(0)}_n(a) + Q_n(0,a) = Q_n(0,a,0)$, so applying Corollary \ref{QShortencor1} results in
\begin{align*}
Q_n(a)=&\ Q_n(0,a,0)\\
=&\ 2Q_{n-1}(0,a,0)+Q_{n-1}(1,a-1,0)+Q_{n-1}(0,a-1,1)\\
&\ -Q_{n-1}(1,a,0)-Q_{n-1}(0,a+1,0)-Q_{n-1}(0,a,1)\\
=&\ 2Q_{n-1}(a)+Q_{n-1}(1,a-1)+Q_{n-1}(0,a-1,1)\\
&\ -Q_{n-1}(1,a)-Q_{n-1}(a+1)-Q_{n-1}(0,a,1)\\
=&\ 2Q_{n-1}(a)+Q_{n-1}(1,a-1)+(Q_{n-1}(a-1,1)-Q^{(0)}_{n-1}(a-1,1))\\
&\ -Q_{n-1}(1,a)-Q_{n-1}(a+1)-(Q_{n-1}(a,1)-Q^{(0)}_{n-1}(a,1))\\
=&\ 2Q_{n-1}(a)+Q_{n-1}(1,a-1)+Q_{n-1}(a-1,1)-Q^{(0)}_{n-1}(0,1)\\
&\ -Q_{n-1}(1,a)-Q_{n-1}(a+1)-Q_{n-1}(a,1)+Q^{(0)}_{n-1}(0,1)\\
=&\ 2Q_{n-1}(a)+ Q_{n-1}(1,a-1) + Q_{n-1}(a-1,1)\\
&\ - Q_{n-1}(1,a) -  Q_{n-1}(a+1)- Q_{n-1}(a,1) \qedhere
\end{align*}
\end{proof}

\begin{lemma}
\label{QShortenab}
If $a,b>0$, 
\begin{align*}
Q_n(a,b) =&\  2Q_{n-1}(a,b)+ Q_{n-1}(a-1,b+1) + Q_{n-1}(a+1,b-1)\\
&\ - Q_{n-1}(a+1,b)- Q_{n-1}(a,b+1)
\end{align*}
\end{lemma}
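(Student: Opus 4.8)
The plan is to push the already-developed $Q_n$-machinery through on the two-entry tuple $(a,b)$, and then dispose of the leftover three-entry terms using Lemma~\ref{QSplitter2} and Lemma~\ref{QBasic}. Everything reduces to correctly specializing general identities, so the work is bookkeeping rather than invention.

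First I would invoke Corollary~\ref{QShortencor1} on the tuple $(a_0,a_1)=(a,b)$ with index $i=1$; this is legitimate since $i=1>0$ and $a_1=b>0$. Reading off the five right-hand terms — keeping in mind that the absent entry $a_2$ is treated as $0$, so the two terms that raise the $(i{+}1)$-group create a brand-new final entry equal to $1$ — gives
\begin{equation*}
Q_n(a,b) = 2Q_{n-1}(a,b) + Q_{n-1}(a+1,b-1) + Q_{n-1}(a,b-1,1) - Q_{n-1}(a+1,b) - Q_{n-1}(a,b+1) - Q_{n-1}(a,b,1).
\end{equation*}
Comparing with the target identity, it then remains to prove
\begin{equation*}
Q_{n-1}(a,b-1,1) - Q_{n-1}(a,b,1) = Q_{n-1}(a-1,b+1).
\end{equation*}

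Second I would apply Lemma~\ref{QSplitter2} to $Q_{n-1}(a,b-1,1)$ with index $i=1$; the hypotheses hold because $i=1>0$, $a_0=a>0$, and $a_2=1>0$. This yields $Q_{n-1}(a,b-1,1) = Q_{n-1}(a-1,b+1,0) + Q_{n-1}(a,b,1)$, and Lemma~\ref{QBasic}(1) strips the trailing zero to give $Q_{n-1}(a-1,b+1,0)=Q_{n-1}(a-1,b+1)$. Substituting this into the displayed expansion from the first step produces exactly the claimed formula. I do not expect a real obstacle here; the only point requiring care is the first step, namely correctly identifying how Corollary~\ref{QShortencor1} acts on a length-two tuple — in particular that the ``$a_{i+1}+1$'' terms spawn a new group $x+2$ (hence the extra argument $1$), and that the subtracted $Q_{n-1}(a,b,1)$ appearing there is precisely what is cancelled by the output of Lemma~\ref{QSplitter2}. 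One should also record the tacit hypothesis $n\geq 1$, inherited from Lemma~\ref{shorten} through Corollary~\ref{QShortencor1}.
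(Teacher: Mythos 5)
Your proof is correct and follows essentially the same route as the paper: apply Corollary~\ref{QShortencor1} to the entry $b$ (with the phantom $a_2=0$ producing the three-entry terms), then use Lemma~\ref{QSplitter2} to identify $Q_{n-1}(a,b-1,1)-Q_{n-1}(a,b,1)$ with $Q_{n-1}(a-1,b+1)$. The bookkeeping in both steps matches the paper's computation exactly.
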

\begin{proof}
Applying Corollary \ref{QShortencor1} to $b$ results in
\begin{align*}
Q_n(a,b)=&\ 2Q_{n-1}(a,b)+Q_{n-1}(a+1,b-1)+Q_{n-1}(a,b-1,1)\\
&\ -Q_{n-1}(a+1,b)-Q_{n-1}(a,b+1)-Q_{n-1}(a,b,1)\\
=&\ 2Q_{n-1}(a,b)+Q_{n-1}(a+1,b-1)+(Q_{n-1}(a,b-1,1)-Q_{n-1}(a,b,1))\\
&\ -Q_{n-1}(a+1,b)-Q_{n-1}(a,b+1)\\
=&\ 2Q_{n-1}(a,b)+Q_{n-1}(a+1,b-1)+(Q_{n-1}(a-1,b+1))\\
&\ -Q_{n-1}(a+1,b)-Q_{n-1}(a,b+1)
\end{align*}
Where the last step is by Lemma \ref{QSplitter2}.
\end{proof}
This result can be reformulated in a different way.
\begin{lemma}
\label{QShortenReform}
If $(a_k)$ is cyclic with period $m$, then
\begin{align*}
\sum_{k=1}^m a_k Q_n(k,m-k)=&\  \sum_{k=1}^m (2a_{k} + a_{k-1} + a_{k+1})Q_{n-1}(k,m-k)- (a_{k} + a_{k-1})Q_{n-1}(k,m+1-k)\\
&\ -a_mQ_{n-1}(m+1)
\end{align*}
\end{lemma}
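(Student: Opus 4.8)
The plan is to reduce each summand $Q_n(k,m-k)$ on the left by one ``unit of length'' using the single‑term shortening identities already in hand, multiply by $a_k$, sum over $k=1,\dots,m$, and then reorganize. For $1\le k\le m-1$ both arguments $k$ and $m-k$ are positive, so Lemma~\ref{QShortenab} applies verbatim:
\[
Q_n(k,m-k)=2Q_{n-1}(k,m-k)+Q_{n-1}(k-1,m-k+1)+Q_{n-1}(k+1,m-k-1)-Q_{n-1}(k+1,m-k)-Q_{n-1}(k,m-k+1).
\]
For the remaining index $k=m$ the second argument is $0$; by Lemma~\ref{QBasic}(1) we have $Q_n(m,0)=Q_n(m)$, and Corollary~\ref{QShortencor2} gives
\[
Q_n(m)=2Q_{n-1}(m)+Q_{n-1}(1,m-1)+Q_{n-1}(m-1,1)-Q_{n-1}(1,m)-Q_{n-1}(m+1)-Q_{n-1}(m,1).
\]

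Next I would introduce the shorthand $R_j:=Q_{n-1}(j,m-j)$ and $S_j:=Q_{n-1}(j,m+1-j)$, noting $R_m=Q_{n-1}(m,0)=Q_{n-1}(m)$, $S_1=Q_{n-1}(1,m)$, $S_m=Q_{n-1}(m,1)$, $Q_{n-1}(1,m-1)=R_1$, $Q_{n-1}(m-1,1)=R_{m-1}$, and $R_0=Q_{n-1}(0,m)=Q_{n-1}(m)=R_m$ (the last equality using $Q_{n-1}^{(0)}(m)=0$, valid for $n\ge 2$). In this notation the two displays read $a_kQ_n(k,m-k)=2a_kR_k+a_kR_{k-1}+a_kR_{k+1}-a_kS_{k+1}-a_kS_k$ for $1\le k\le m-1$, and $a_mQ_n(m,0)=2a_mR_m+a_mR_1+a_mR_{m-1}-a_mS_1-a_mQ_{n-1}(m+1)-a_mS_m$. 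Summing over $k=1,\dots,m$, I would reindex the shifted sums $\sum a_kR_{k-1}$ (put $j=k-1$) and $\sum a_kR_{k+1}$, $\sum a_kS_{k+1}$ (put $j=k+1$), then fold the out‑of‑range contributions back into $1\le j\le m$ using the cyclicity $a_0=a_m$, $a_{m+1}=a_1$ together with $R_0=R_m$. After collecting, the coefficient of $R_j=Q_{n-1}(j,m-j)$ is $2a_j+a_{j-1}+a_{j+1}$, the coefficient of $S_j=Q_{n-1}(j,m+1-j)$ is $-(a_j+a_{j-1})$, and $Q_{n-1}(m+1)$ survives exactly once with coefficient $-a_m$; this is precisely the asserted identity.

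The main obstacle will be the boundary bookkeeping: one must check that the ``extra'' pieces thrown off at the ends---namely $a_1R_0$ from the $j=k-1$ reindexing, and $a_mR_1$, $a_mR_{m-1}$, $a_mS_1$, $a_mS_m$, $-a_mQ_{n-1}(m+1)$ from the $k=m$ reduction---assemble, via $a_0=a_m$, $a_{m+1}=a_1$, $R_0=R_m$, into exactly the cyclic coefficients $2a_j+a_{j-1}+a_{j+1}$ and $-(a_j+a_{j-1})$ at the indices $j=1,m-1,m$, leaving no residual term besides $-a_mQ_{n-1}(m+1)$. A secondary point worth flagging is that the identification $R_0=R_m$ needs $n\ge 2$ (for $n=1$ it fails by the correction $Q_0^{(0)}(m)$), so if the identity is wanted at $n=1$ that case should be verified directly.
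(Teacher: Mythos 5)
Your proof follows the paper's argument essentially verbatim: expand the $k=m$ term with Corollary \ref{QShortencor2} and the terms $1\le k\le m-1$ with Lemma \ref{QShortenab}, reindex the shifted sums, and collect coefficients using the cyclicity of $(a_k)$. The boundary caveat you flag is real and is silently elided in the paper's own proof --- the identification $Q_{n-1}(0,m)=Q_{n-1}(m)$ (and a similar elision inside Corollary \ref{QShortencor2}) makes the stated identity exceed the true left-hand side by $(a_1+a_m)Q^{(0)}_{n-1}(m)$, which is nonzero exactly when $n=1$ and $a_1+a_m\neq 0$ (e.g.\ $m=2$, $a_1=1$, $a_2=0$ gives $1\neq 2$), though it vanishes in every application made in the paper; flagging this is a point in your favor.
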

\begin{proof}
Applying Corollary \ref{QShortencor1} and Lemma \ref{QShortenab} to the appropriate terms in the summation on the left side results in
\begin{align*}
\sum_{k=1}^m a_k Q_n(k,m-k) =&\  a_mQ_n(m) + \sum_{k=1}^{m-1} a_k Q_n(k,m-k)\\
=&\ 2a_mQ_{n-1}(m)+a_mQ_{n-1}(1,m-1)+a_mQ_{n-1}(m-1,1)\\
&\ - a_mQ_{n-1}(1,m)- a_mQ_{n-1}(m+1) - a_mQ_{n-1}(m,1)\\
&\ + \sum_{k=1}^{m-1} 2a_k Q_{n-1}(k,m-k)\\
&\ + \sum_{k=1}^{m-1} a_k Q_{n-1}(k+1,m-k-1)+ \sum_{k=1}^{m-1} a_k Q_{n-1}(k-1,m-k+1)\\
&\ - \sum_{k=1}^{m-1} a_k Q_{n-1}(k+1,m-k)- \sum_{k=1}^{m-1} a_k Q_{n-1}(k,m+1-k)
\end{align*}
Re-indexing the summations so that they have the same summands produces
\begin{align*}
\sum_{k=1}^m a_k Q_n(k,m-k)=&\ 2a_mQ_{n-1}(m)+a_mQ_{n-1}(1,m-1)+a_mQ_{n-1}(m-1,1)
\end{align*}
\begin{align*}
&\ - a_mQ_{n-1}(1,m)- a_mQ_{n-1}(m+1) - a_mQ_{n-1}(m,1)\\
&\ + \sum_{k=1}^{m-1} 2a_k Q_{n-1}(k,m-k)\\
&\ + \sum_{k=2}^{m} a_{k-1} Q_{n-1}(k,m-k)+ \sum_{k=0}^{m-2} a_{k+1} Q_{n-1}(k,m-k)\\
&\ - \sum_{k=2}^{m} a_{k-1} Q_{n-1}(k,m+1-k)- \sum_{k=1}^{m-1} a_k Q_{n-1}(k,m+1-k)
\end{align*}
Grouping together like terms results in
\begin{align*}
\sum_{k=1}^m a_k Q_n(k,m-k)=&\ - a_mQ_n(m+1)+ \sum_{k=1}^{m} 2a_k Q_n(k,m-k)\\
&\ + \sum_{k=1}^{m} a_{k-1} Q_{n-1}(k,m-k)+ \sum_{k=1}^{m} a_{k+1} Q_{n-1}(k,m-k)\\
&\ - \sum_{k=1}^{m} a_{k-1} Q_{n-1}(k,m+1-k)- \sum_{k=1}^{m} a_k Q_{n-1}(k,m+1-k)
\end{align*}
And so
\begin{align*}
\sum_{k=1}^m a_k Q_n(k,m-k)=&\ \sum_{k=1}^m (2a_{k} + a_{k-1} + a_{k+1})Q_{n-1}(k,m-k)\\
&\ - \sum_{k=1}^m (a_{k} + a_{k-1})Q_{n-1}(k,m+1-k)\\
&\ -a_mQ_{n-1}(m+1)\qedhere
\end{align*}
\end{proof}

\begin{corollary}
\label{QAlternator}
For $m$ even, 
	\begin{equation}
	\sum_{k=1}^{m} (-1)^{k+1} Q_n(k,m-k) = G_{n-1}(m+1,0)
	\end{equation}
\end{corollary}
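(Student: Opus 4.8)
The plan is to obtain Corollary \ref{QAlternator} as a direct specialization of Lemma \ref{QShortenReform}, followed by a combinatorial identification of the single surviving term.

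First I would apply Lemma \ref{QShortenReform} with the sequence $a_k := (-1)^{k+1}$. Since $m$ is even, this sequence has period $2$, which divides $m$, so it is cyclic with period $m$ and the hypothesis of the lemma is satisfied. Next I would compute the coefficients appearing on the right-hand side of Lemma \ref{QShortenReform}: because $a_{k-1} = a_{k+1} = -a_k$, we have $2a_k + a_{k-1} + a_{k+1} = 0$ and $a_k + a_{k-1} = 0$ for every $k$, so both sums on the right-hand side vanish term by term. The evenness of $m$ is used a second time here: $a_m = (-1)^{m+1} = -1$, so the only term that survives is $-a_m Q_{n-1}(m+1) = Q_{n-1}(m+1)$. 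Hence $\sum_{k=1}^m (-1)^{k+1} Q_n(k,m-k) = Q_{n-1}(m+1)$.

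It then remains to verify that $Q_{n-1}(m+1) = G_{n-1}(m+1,0)$, which I would do by unwinding definitions. By definition $Q_{n-1}(m+1) = \sum_{x\geq 0} Q^{(x)}_{n-1}(m+1) = \sum_{x\geq 0} P_{n-1}(x,\dots,x)$, where each summand has $m+1$ entries all equal to $x$; and $\sum_{x\geq 0} P_{n-1}(x,\dots,x)$ counts precisely the $(m+1)$-tuples of Dyck paths of total length $2(n-1)$ all of whose heights coincide, which is $G_{n-1}(m+1,0)$ by the identity $G_n(m,k) = \sum_{|a_i - a_j|\leq k} P_n(a_1,\dots,a_m)$. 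I do not expect a genuine obstacle here: the substantive work is already contained in Lemma \ref{QShortenReform}, and the only subtlety worth flagging is that both the periodicity hypothesis and the sign $a_m = -1$ rely on $m$ being even — were $m$ odd, the sequence $(-1)^{k+1}$ would fail to be cyclic with period $m$ and the computation would change.
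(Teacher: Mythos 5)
Your proposal is correct and follows essentially the same route as the paper: specialize Lemma \ref{QShortenReform} to $a_k = (-1)^{k+1}$, observe that both sums on the right-hand side vanish coefficient by coefficient, and identify the surviving term $Q_{n-1}(m+1)$ with $G_{n-1}(m+1,0)$. Your explicit unwinding of that last identification is slightly more detailed than the paper's, which states it without comment, but the argument is the same.
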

\begin{proof}
Clearly, $((-1)^{k+1})_{k\in \mathbb{Z}}$ is cyclic with period $m$ for $m$ even.  Therefore, applying Lemma \ref{QShortenReform} to the left hand side results in
\begin{align*}
\sum_{k=1}^{m} (-1)^{k+1} Q_n(k,m-k)=&\ \sum_{k=1}^{m} (2(-1)^{k+1}+(-1)^{k}+(-1)^{k+2}) Q_{n-1}(k,m-k)\\
&\ -\sum_{k=1}^{m} ((-1)^{k+1}+(-1)^{k}) Q_{n-1}(k,m+1-k)\\
&\ -(-1)^{m+1} Q_{n-1}(m+1)\\
=&\ Q_{n-1}(m+1)\\
=&\  G_{n-1}(m+1,0)
\end{align*}
which is the desired equality
\end{proof}
$G_n(m,k)$ can also be expressed in terms of this new function.
\begin{lemma}
\label{GtoQ}
\begin{equation}
G_n(m,k)= \sum_{\substack{m_0>0\\ \sum m_i= m }}\binom{m}{m_0,\dots, m_k} Q_n(m_0,\dots, m_k)
\end{equation}
\end{lemma}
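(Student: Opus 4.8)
The plan is to start from the path-height expansion $G_n(m,k)=\sum_{|a_i-a_j|\leq k}P_n(a_1,\dots,a_m)$, where the sum runs over all ordered tuples $(a_1,\dots,a_m)\in\mathbb{Z}_{\geq 0}^m$ whose entries pairwise differ by at most $k$, and to reorganize this sum by recording for each tuple its smallest entry together with the ``shape'' of the remaining entries relative to it.

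Concretely, first I would observe that for any tuple $(a_1,\dots,a_m)$ appearing in the sum, setting $x:=\min_i a_i$ forces every $a_i$ into $\{x,x+1,\dots,x+k\}$, so the data $(m_0,\dots,m_k)$ defined by $m_j:=|\{i:a_i=x+j\}|$ is well defined and satisfies $m_0>0$ (the minimum is attained) and $\sum_{j=0}^k m_j=m$. Conversely, each choice of $x\geq 0$ and of $(m_0,\dots,m_k)$ with $m_0>0$ and $\sum m_j=m$ is realized, and the number of ordered tuples $(a_1,\dots,a_m)$ producing a given pair $(x,(m_0,\dots,m_k))$ is exactly $\binom{m}{m_0,\dots,m_k}$, since this counts the ways to choose which coordinates equal $x$, which equal $x+1$, and so on. Second, by the symmetry of $P_n$ every such tuple contributes the common value $P_n(x,\dots,x,x+1,\dots,x+1,\dots,x+k,\dots,x+k)=Q_n^{(x)}(m_0,\dots,m_k)$. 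Grouping the sum accordingly gives
\[
G_n(m,k)=\sum_{x=0}^\infty\ \sum_{\substack{m_0>0\\ \sum m_j=m}}\binom{m}{m_0,\dots,m_k}Q_n^{(x)}(m_0,\dots,m_k),
\]
and then interchanging the two sums (harmless, since all summands are nonnegative and for fixed $n$ only finitely many $x$ contribute) and using $\sum_{x=0}^\infty Q_n^{(x)}(m_0,\dots,m_k)=Q_n(m_0,\dots,m_k)$ yields the claimed identity.

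There is no serious obstacle here; the only care needed is the combinatorial bookkeeping — verifying that ``$m_0>0$'' is precisely the condition that $x$ equals the minimum (so that the decomposition by $(x,\text{type})$ partitions the index set of the sum with no overlap and no omission), that the multinomial coefficient correctly counts the fibers of this decomposition, and that the degenerate cases behave: when $k=0$ the only admissible type is $(m)$ and the identity collapses to $G_n(m,0)=Q_n(m)$, while tuples involving the empty (height-$0$) path are accommodated by the convention in (\ref{zero}).
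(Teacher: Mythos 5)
Your proof is correct and follows essentially the same route as the paper's: both decompose the sum $\sum_{|a_i-a_j|\leq k}P_n(a_1,\dots,a_m)$ by the minimum value $x$ and the type $(m_0,\dots,m_k)$, use the symmetry of $P_n$ to identify each fiber's common value with $Q_n^{(x)}(m_0,\dots,m_k)$, count the fiber with the multinomial coefficient, and sum over $x$. Your write-up is just a more explicit account of the same bookkeeping.
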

\begin{proof}
$G_n(m,k)= \sum_{|a_i-a_j|\leq k} P_n(a_1,\dots, a_m) = \sum_{x = 0}^\infty \sum_{\substack{a_i\leq x+k\\ \min(a_i)=x}} P_n(a_1,\dots, a_m)$

For each $x\in \mathbb{Z}_{\geq 0}$, $\sum_{\substack{a_i\leq x+k\\ \min(a_i)=x}} P_n(a_1,\dots, a_m)$ can be partitioned into the path-height functions with the same parameters up to rearrangement.  In this sum, there are $\binom{m}{m_0,\dots, m_k}$ path-height functions that are equal to $Q_n^{(x)}(m_0,\dots, m_k)$, and so 
\begin{align*}
G_n(m,k)=  \sum_{x = 0}^\infty \sum_{\substack{a_i\leq x+k\\ \min(a_i)=x}} P_n(a_1,\dots, a_m) =&\ \sum_{x = 0}^\infty \sum_{\substack{m_0>0\\ \sum m_i= m }} \binom{m}{m_0,\dots, m_k} Q^{(x)}_n(m_0,\dots, m_k)\\
=&\  \sum_{\substack{m_0>0\\ \sum m_i= m }}\binom{m}{m_0,\dots, m_k} Q_n(m_0,\dots, m_k) \qedhere
\end{align*}
\end{proof}

\begin{corollary}
\label{GShorten}
For $n\geq 1$, 
\begin{align*}
4G_n(m,1) - G_{n+1}(m,1) =&\  G_n(m+1,1)- 2(m-1)Q_n(m)\\
&\ + \sum_{k=1}^{m-1}(2\binom{m}{k}- \binom{m}{k+1} - \binom{m}{k-1})Q_n(k,m-k) 
\end{align*}
\end{corollary}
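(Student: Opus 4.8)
The plan is to rewrite each $G$-term as a sum of grouped path-height functions via Lemma~\ref{GtoQ}, and then apply Lemma~\ref{QShortenReform} to the single term $G_{n+1}(m,1)$ to push it down to level $n$.

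First I would specialize Lemma~\ref{GtoQ} to $k=1$. Since $\binom{m}{m_0,m_1}=\binom{m}{m_0}$ whenever $m_0+m_1=m$, and $Q_n(j,0)=Q_n(j)$ by Lemma~\ref{QBasic}(1), we obtain
\[
G_n(m,1)=\sum_{j=1}^{m}\binom{m}{j}\,Q_n(j,m-j),
\qquad
G_n(m+1,1)=\sum_{j=1}^{m+1}\binom{m+1}{j}\,Q_n(j,m+1-j),
\]
and likewise with $n$ replaced by $n+1$; in particular the $j=m$ term of the first identity is $Q_n(m)$, and the $j=m+1$ term of the second is $Q_n(m+1)$.

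Next I would apply Lemma~\ref{QShortenReform} to $G_{n+1}(m,1)=\sum_{j=1}^{m}a_j\,Q_{n+1}(j,m-j)$, taking $(a_j)$ to be the $m$-periodic sequence with $a_j=\binom{m}{j}$ for $0\le j\le m-1$. This is where the hypothesis $n\ge 1$ is used: Lemma~\ref{QShortenReform} is applied at level $n+1$, and $n+1\ge 2$ is the range in which it holds (through its appeal to Corollary~\ref{QShortencor2}). The point needing care is that, although $a_j=\binom{m}{j}$ for every $0\le j\le m$ (because $\binom{m}{m}=\binom{m}{0}=1$), one has $a_{m+1}=a_{1}=\binom{m}{1}=m$ rather than $\binom{m}{m+1}=0$; so I would split off the $j=m$ term of $\sum_{j=1}^m(2a_j+a_{j-1}+a_{j+1})Q_n(j,m-j)$, whose coefficient is $2a_m+a_{m-1}+a_{m+1}=2+2m$, from the remaining sum $\sum_{j=1}^{m-1}\bigl(2\binom{m}{j}+\binom{m}{j-1}+\binom{m}{j+1}\bigr)Q_n(j,m-j)$. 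Pascal's rule gives $a_j+a_{j-1}=\binom{m}{j}+\binom{m}{j-1}=\binom{m+1}{j}$ for $1\le j\le m$, so the second sum produced by Lemma~\ref{QShortenReform} equals $\sum_{j=1}^m\binom{m+1}{j}Q_n(j,m+1-j)=G_n(m+1,1)-Q_n(m+1)$, while its trailing term is $a_m Q_n(m+1)=Q_n(m+1)$; these two copies of $Q_n(m+1)$ cancel. I therefore expect the identity
\[
G_{n+1}(m,1)=\sum_{j=1}^{m-1}\Bigl(2\tbinom{m}{j}+\tbinom{m}{j-1}+\tbinom{m}{j+1}\Bigr)Q_n(j,m-j)+(2+2m)\,Q_n(m)-G_n(m+1,1).
\]

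Finally I would substitute this, together with $4G_n(m,1)=\sum_{j=1}^{m-1}4\binom{m}{j}Q_n(j,m-j)+4Q_n(m)$, into $4G_n(m,1)-G_{n+1}(m,1)$ and collect coefficients: the coefficient of $Q_n(j,m-j)$ becomes $4\binom{m}{j}-2\binom{m}{j}-\binom{m}{j-1}-\binom{m}{j+1}=2\binom{m}{j}-\binom{m}{j+1}-\binom{m}{j-1}$, the coefficient of $Q_n(m)$ becomes $4-(2+2m)=-2(m-1)$, and $G_n(m+1,1)$ reappears with a plus sign, which is exactly the claimed identity. The only genuine obstacle is the boundary bookkeeping in the third paragraph---keeping the periodic sequence $(a_j)$ straight at $j=m$, where $a_{m+1}\ne\binom{m}{m+1}$, and correctly tracking the $Q_n(m)$ coefficient and the two $Q_n(m+1)$ terms that are produced and then cancel; everything else is a mechanical rearrangement of sums.
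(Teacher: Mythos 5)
Your proposal is correct and follows essentially the same route as the paper: express each $G$-term via Lemma~\ref{GtoQ}, apply Lemma~\ref{QShortenReform} to $G_{n+1}(m,1)$ with the $m$-periodic sequence $a_j=\binom{m}{j'}$, use Pascal's rule to reassemble the second sum (plus the trailing $a_mQ_n(m+1)$ term) into $G_n(m+1,1)$, and collect coefficients. Your careful handling of the boundary term $a_{m+1}=a_1=m$ and the resulting coefficient $2m+2$ on $Q_n(m)$ matches the paper's computation exactly.
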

\begin{proof}
By Lemma \ref{GtoQ},
\begin{align*}
G_{n+1}(m,1)= \sum_{k=1}^m \binom{m}{k} Q_{n+1}(k,m-k)
\end{align*}
Letting $a_k := \binom{m}{k'}$, where $0\leq k' < m$ and $k\equiv k'$ (mod $m$), Lemma \ref{QShortenReform} turns the above into
\begin{align*}
G_{n+1}(m,1)&\ =\sum_{k=1}^{m} (2a_k+a_{k-1}+a_{k+1}) Q_{n}(k,m-k)\\
&\ - \sum_{k=1}^m (a_{k} + a_{k-1})Q_{n}(k,m+1-k)- a_m Q_{n}(m+1)
\end{align*}
Pulling out $Q_n(m)$ and plugging in for $a_k$ results in
\begin{align*}
G_{n+1}(m,1)=&\ (2m+2)Q_n(m) + \sum_{k=1}^{m-1} (2\binom{m}{k}+\binom{m}{k-1}+\binom{m}{k+1}) Q_{n}(k,m-k)
\end{align*}
\begin{align*}
&\ - \sum_{k=1}^m (\binom{m}{k} + \binom{m}{k-1})Q_{n}(k,m+1-k)- Q_{n}(m+1)\\
=&\ (2m+2)Q_n(m) + \sum_{k=1}^{m-1} (2\binom{m}{k}+\binom{m}{k-1}+\binom{m}{k+1}) Q_{n}(k,m-k)\\
&\ - \sum_{k=1}^{m+1} \binom{m+1}{k}Q_{n}(k,m+1-k)
\end{align*}
Which by Lemma \ref{GtoQ} means that
\begin{align*}
G_{n+1}(m,1)=&\ (2m+2)Q_n(m) + \sum_{k=1}^{m-1} (2\binom{m}{k}+\binom{m}{k-1}+\binom{m}{k+1}) Q_{n}(k,m-k)\\
&\ - G_n(m+1,1)
\end{align*}
As a result, 
\begin{align*}
4G_n(m,1) - G_{n+1}(m,1)= &\ 4\sum_{k=1}^m\binom{m}{k} Q_n(k,m-k)-(2m+2)Q_n(m)\\
&\ - \sum_{k=1}^{m-1} (2\binom{m}{k}+\binom{m}{k-1}+\binom{m}{k+1}) Q_{n}(k,m-k)+ G_n(m+1,1)\\
=&\ G_n(m+1,1)-(2m-2)Q_n(m)\\
&\ + \sum_{k=1}^{m-1} (2\binom{m}{k}-\binom{m}{k-1}-\binom{m}{k+1}) Q_{n}(k,m-k)\qedhere
\end{align*}\end{proof}
\section{Proof of Theorem \ref{m3Theorem} and \ref{m4Theorem}}

\noindent \emph{Proof of Theorem \ref{m3Theorem}:}
\begin{align*}
T(3,n) =&\  4T(2,n)-T(2,n+1)\\
=&\  4G_n(2,1)-G_{n+1}(2,1)\\
=&\  G_n(3,1) + (\sum_{k=1}^1 (2\binom{2}{k} - \binom{2}{k-1} - \binom{2}{k+1})Q_n(k,2-k)) + (2-2*2)Q_n(2)\\
=&\ G_n(3,1) + 2Q_n(1,1) - 2Q_n(2)\\
=&\ G_n(3,1) + 2(\sum_{k=1}^2(-1)^{k+1} Q_n(k,2-k))\\
=&\ G_n(3,1) + 2G_{n-1}(3,0)
\end{align*}	
	where the last step is by Corollary \ref{QAlternator}.\qed
	
	\noindent \emph{Proof of Theorem \ref{m4Theorem}:}
	\begin{align*}
	T(4,n) =&\  4T(3,n)-T(3,n+1)\\
	\end{align*}
	\begin{align*}
	=&\  4(G_n(3,1)+2G_{n-1}(3,0))-(G_{n+1}(3,1)+2G_{n}(3,0))\\
	=&\  (4G_{n}(3,1)-G_{n+1}(3,1)) + 8 G_{n-1}(3,0)-2G_{n}(3,0)\\
	=&\  (G_n(4,1) + (\sum_{k=1}^2 (2\binom{3}{k} - \binom{3}{k-1} - \binom{3}{k+1})Q_n(k,3-k))\\
	&\ + (2-3*2)Q_n(3)) + 8 Q_{n-1}(3)-2Q_{n}(3)\\
	=&\  G_n(4,1)+ 8 Q_{n-1}(3) + 2Q_n(1,2) + 2Q_n(2,1) -6Q_{n}(3)
	\end{align*}	 
	
	Applying Corollary \ref{QShortenab} to the third and fourth terms and Corollary \ref{QShortencor2} to the last term results in\\
	\begin{align*}
	T(4,n)=&\  G_n(4,1)+ 8 Q_{n-1}(3)\\
	&\ + 2(2Q_{n-1}(1,2)+Q_{n-1}(2,1)+Q_{n-1}(3)-Q_{n-1}(2,2)-Q_{n-1}(1,3))\\
	&\ + 2(2Q_{n-1}(2,1)+Q_{n-1}(3)+Q_{n-1}(1,2)-Q_{n-1}(3,1)-Q_{n-1}(2,2))\\
	&\ -6(2Q_{n-1}(3)+Q_{n-1}(1,2)+Q_{n-1}(2,1)-Q_{n-1}(1,3)-Q_{n-1}(4)-Q_{n-1}(3,1))\\
	=&\  G_n(4,1)-4Q_{n-1}(2,2)+4Q_{n-1}(1,3)+6Q_{n-1}(4)+4Q_{n-1}(3,1)\\
	=&\  G_n(4,1)+10Q_{n-1}(4)+ 4(Q_{n-1}(1,3)-Q_{n-1}(2,2)+Q_{n-1}(3,1)-Q_{n-1}(4))\\
	=&\  G_n(4,1) + 10G_{n-1}(4,0)+ 4G_{n-2}(5,0).
	\end{align*}
	
	Where the last step is by Corollary \ref{QAlternator}.\qed
	
	The proofs for the above theorems extend to the following for $m=5$:
\begin{equation}
	\label{m=5}
	T(5,n) = G_n(5,1) + 37G_{n-1}(5,0)+ 35G_{n-2}(6,0)+ 10G_{n-3}(7,0)-14G_{n-2}(5,0).
\end{equation}	

Notice that, due to the last term, the above equation is not in the form of Equation \ref{conjecture}, and is not even a positively weighted combinatorial interpretation.

\section*{Acknowledgment}
We thank the Summer Undergraduate Research Fellowship and the NASA Pennsylvania Space Grant Consortium for their generous support.


\begin{thebibliography}{9}
\bibitem{T2n}
E. Allen and I. Gheorghiciuc, \emph{A weighted interpretation for the super Catalan
numbers}, J. Integer Seq. 17 (2014), Article 14.10.7
 
\bibitem{Georgiadis}
 E. Georgiadis, A. Munemasa, and H. Tanaka, \emph{A note on super Catalan numbers}, Interdiscip.
Inform. Sci. 18 (2012), 23–24.

\bibitem{SuperBallot}
I. Gessel, \emph{Super ballot numbers},  J. Symb. Comput.  14 (1992), 179-194.  
   
\bibitem{GesselT2n}
I. Gessel and  G. Xin, \emph{A combinatorial interpretation of the numbers $6(2n)!/n!(n+2)!$}, J. Integer Seq.  8 (2005) Article 05.2.3. 

\bibitem{Stanley}
R. Stanley, \emph{Catalan numbers}, Cambridge University Press (2015).
\end{thebibliography}
\end{document}